\documentclass[12pt,twoside]{amsart}
\usepackage{amssymb}
\usepackage{verbatim}
\usepackage{amsmath}
\usepackage{bm}
\usepackage{a4wide}
\usepackage[latin1]{inputenc}
\usepackage[T1]{fontenc}
\usepackage{times}
\usepackage{amssymb,latexsym}
\usepackage{enumerate}
\usepackage[stable]{footmisc}
\usepackage{color}
\usepackage[colorlinks,linkcolor=black,citecolor=black, urlcolor=black]{hyperref}

\makeatletter
\newcommand{\sumprime}{\if@display\sideset{}{'}\sum%
            \else\sum'\fi}
\makeatother

\begin{document}

\numberwithin{equation}{section}

\newtheorem{theorem}{Theorem}[section]
\newtheorem{proposition}[theorem]{Proposition}
\newtheorem{conjecture}[theorem]{Conjecture}
\def\theconjecture{\unskip}
\newtheorem{corollary}[theorem]{Corollary}
\newtheorem{lemma}[theorem]{Lemma}
\newtheorem{observation}[theorem]{Observation}
\newtheorem{definition}{Definition}
\numberwithin{definition}{section} 
\newtheorem{remark}{Remark}
\def\theremark{\unskip}
\newtheorem{kl}{Key Lemma}
\def\thekl{\unskip}
\newtheorem{question}{Question}
\def\thequestion{\unskip}
\newtheorem{example}{Example}
\def\theexample{\unskip}
\newtheorem{problem}{Problem}

\thanks{Supported by National Natural Science Foundation of China, No. 12271101.
\newline
Bo-Yong Chen, School of Mathematical Sciences, Fudan University, Shanghai, 200433, China.
\newline
email: boychen@fudan.edu.cn. 
\newline
Yuanpu Xiong, School of Mathematical Sciences, Fudan University, Shanghai, 200433, China.
\newline
email: ypxiong@fudan.edu.cn}



\title{Regularity of the $p-$Bergman kernel}
\author{Bo-Yong Chen and Yuanpu Xiong}

\date{}

\begin{abstract}
We show that the $p-$Bergman kernel $K_p(z)$ on a bounded domain $\Omega$ is of locally $C^{1,1}$ for $p\geq1$.The proof is based on the locally Lipschitz continuity of the off-diagonal $p-$Bergman kernel $K_p(\zeta,z)$ for fixed $\zeta\in \Omega$.  Global irregularity of  $K_p(\zeta,z)$ is presented for some smooth strongly pseudoconvex domains when $p\gg 1$. As an application of the local $C^{1,1}-$regularity, an upper estimate for the Levi form of $\log K_p(z)$ for $1<p<2$ is provided. Under the  condition that the hyperconvexity index of $\Omega$ is positive, we obtain the log-Lipschitz continuity of $p\mapsto{K_p(z)}$ for $1\leq{p}\leq2$.  
\end{abstract}

\maketitle

\tableofcontents

\section{Introduction}
Regularity  in  extremal  problems in real or complex analysis is of classical interests.  Let us mention two typical classes of extremal problems.   The first class is  Green-type functions.  Recall that the Green function for the Laplacian $\Delta$ is given by 
\[
G_\Omega(\zeta,z):=\sup\left\{u(\zeta): u\in{SH^-(\Omega)},\ \limsup_{z'\rightarrow{z}}(u(z')-E(z'-z))<\infty\right\},
\]
where $SH^-(\Omega)$ denotes the set of all negative subharmonic (sh) functions on a bounded domain $\Omega\subset \mathbb R^n$,  and $E(\cdot)$ is the fundamental solution of $\Delta$.  The well-known theorem of Perron shows that $G_\Omega(\cdot,z)$ is harmonic on $\Omega\setminus \{z\}$; in particular,  it is real-analytic ($C^\omega$) there.   The $p-$Green function $G_{\Omega,p}(\zeta,z)$ is defined in a similar way as $G_\Omega(\zeta,z)$   with the role of  $\Delta$ replaced by the (non-linear) $p$-Laplacian:
\[
\Delta_p u:=\mathrm{div}(|\nabla{u}|^{p-2}\nabla{u}),\ \ \ 1\leq{p}<\infty.
\]
For $p>1$ and $p\neq2$,   Ural'ceva \cite{Uralceva} proved that $G_{\Omega,p}(\cdot,z)$ is  of locally $C^{1,\alpha}$ on $\Omega\setminus\{z\}$ for some $\alpha>0$ (see also Uhlenbeck \cite{Uhlenbeck} and Evans \cite{Evans}). For planar domains, the regularity can be improved beyond $C^2$ (cf. Iwaniec-Manfredi \cite{IM}). The pluricomplex Green function on a bounded domain $\Omega\subset \mathbb C^n$ is defined by
\[
g_\Omega(\zeta,z):=\sup\left\{u(\zeta): u\in PSH^-(\Omega),  \limsup_{z'\rightarrow z} (u(z')-\log|z'-z|)<\infty\right\},
\]
where $PSH^-(\Omega)$ denotes the set of all negative plurisubharmonic (psh) functions on $\Omega$.  Through the work of Guan \cite{Guan} and B\l ocki \cite{BlockiRegularity},  it is known that $g_\Omega(\cdot,z)$ is $C^{1,1}$ in $\overline{\Omega}\setminus \{z\}$ if $\Omega$ is strongly pseudoconvex. On the other hand, an example of Bedford-Demailly \cite{BD} shows that this result is optimal.  Nevertheless, it is unclear whether\/ {\it interior} $C^{1,1}-$regularity is the best possible.  The analysis of  regularity of $G_{\Omega,p}(\cdot,z)$ or $g_\Omega(\cdot,z)$  is nonlinear and nontrivial.  

The second class is Bergman-type functions.  Recall that the kernel of S. Bergman on a bounded domain $\Omega\subset \mathbb C^n$ is given by
\begin{equation}\label{eq:K_2_extremal}
K_\Omega(z):=\sup\left\{|f(z)|^2: f\in{A^2(\Omega)},\ \|f\|_2=1\right\},
\end{equation}
where 
$
A^2(\Omega):=\{f\in \mathcal O(\Omega): \|f\|_2^2:=\int_\Omega |f|^2<\infty\}
$
is the Bergman space, and $\mathcal{O}(\Omega)$ denotes the set of holomorphic functions. Actually, the original definition of the Bergman kernel is  $K_\Omega(z)=\sum_j |h_j(z)|^2$ where $\{h_j\}$ is an/any   orthonormal basis of $A^2(\Omega)$,  from which \eqref{eq:K_2_extremal} and 
$C^\omega-$regularity of $K_\Omega(z)$ immediately follows.  Let us briefly show how to derive $C^\omega-$regularity directly from \eqref{eq:K_2_extremal} by using the {\it Calculus of Variations}.  Following Bergman, we consider the minimizing integral
$$
m_\Omega(z):=\inf\left\{\|f\|_2: f\in A^2(\Omega),f(z)=1\right\}.
$$ 
Clearly,  $K_\Omega(z)=m_\Omega(z)^{-2}$.  Let $m_\Omega(\cdot,z)\in A^2(\Omega)$ be the unique minimizer, which exists by a simple normal family argument.  Given $h\in A^2(\Omega)$ with $h(0)=0$,  set 
 $f_t:= m_\Omega(\cdot,z) +th$.  Since $\|f_t\|_2$ attains its minimum at $t=0$,  it follows that
  $$
  0=\left.\frac{\partial \|f_t\|_2^2}{\partial t}\right|_{t=0} = \int_\Omega h\cdot\overline{m_\Omega(\cdot,z)}.
  $$ 
For any  $f\in A^2(\Omega)$,  we substitute $h=f-f(z)$ into the previous equality and get 
 $$
  \int_\Omega f\cdot\overline{ m_\Omega(\cdot,z)}=f(z) \int_\Omega \overline{ m_\Omega(\cdot,z)}.
$$
  Take $f=m_\Omega(\cdot,z)$,  we get $\int_\Omega \overline{ m_\Omega(\cdot,z) }=m_\Omega(z)^2=1/K_\Omega(z)$,  so that 
 $$
  f(z)=  \int_\Omega f\cdot\overline{K_\Omega(z) m_\Omega(\cdot,z)}=:  \int_\Omega f\cdot\overline{K_\Omega(\cdot,z)}.  
  $$ 
  This is exactly the reproducing formula  of $A^2(\Omega)$,  which implies $K_\Omega(\zeta,z)=\overline{K_\Omega(z,\zeta)}$, so that $K_\Omega(\zeta,z)$ is holomorphic in $(\zeta,\overline{z})$ in view of Hartogs' theorem on separate holomorphicity. Since $K_\Omega(z,z)=K_\Omega(z)m_\Omega(z,z)=K_\Omega(z)$,  it follows that $K_\Omega(z)\in C^\omega(\Omega)$.
  
This argument above essentially hints a basic way of studying the  $p-$Bergman kernel given by
 $$
K_p(z):=\sup\{|f(z)|^p: f\in A^p(\Omega),  \|f\|_p=1\},
$$
where $A^p(\Omega)=\{f\in \mathcal O(\Omega):\|f\|_p^p=\int_\Omega |f|^p<\infty\}$ is the $p-$Bergman space.   Actually,  a parallel argument (see \cite{CZ}, Theorem 2.13) yields the following fundamental reproducing formula of $A^p(\Omega)$ for $p\ge 1$:
\begin{equation}\label{eq:RPF}
f(z) 
 =   \int_\Omega |m_p(\cdot,z)|^{p-2}\,\overline{K_p(z)m_p(\cdot,z)}\,f,  \ \ \ \forall\,f\in A^p(\Omega)
  \end{equation}
  where $m_p(\cdot,z)$ is the unique solution of the following minimizing problem:
\begin{equation}\label{eq:MinProb}
m_p(z)=\inf\left\{\|f\|_p:f\in A^p(\Omega),f(z)=1\right\}.
\end{equation}
It is then reasonable to call $K_p(\zeta,z):=K_p(z)m_p(\zeta,z)$ the off-diagonal $p-$Bergman kernel.  Note that $K_2(\zeta,z)=K_\Omega(\zeta,z)$ and $K_2(z)=K_\Omega(z)$.  
The nonlinear factor $|m_p(\cdot,z)|^{p-2}$ in \eqref{eq:RPF} indicates the basic difference between the case $p\neq 2$ and $p=2$. For example,  as we shall see later,  $K_p(\zeta,z)$ does not coincide with  $\overline{K_p(z,\zeta)}$ for $p\neq 2$, so that we cannot obtain the real analyticity of $K_p(z)$ as the case $p=2$. Actually, there are various domains on which the function $u(z):=K_p(z)-K_2(z)$ violates  the unique continuation principle,  i.e.,  $u$ is not identically zero but it  vanishes  on a nonempty open subset  for some $p>2$;  in particular,  $K_p(z)$ is not real-analytic (cf.  \cite{CZ}). On the other hand, it follows easily from Cauchy's estimates that  $K_p(z)$ is locally Lipschitz continuous. Thus the following question becomes fundamental.

\begin{problem}
What kind of interior regularity does $K_p(\cdot)$  enjoy?
\end{problem}

Let us denote as usual  $C^{k,\alpha}_{\rm loc}(\Omega)$,  $k=0,1,\cdots$,  $0<\alpha\le 1$,     the H\"older space of complex-valued functions $u\in C^k(\Omega)$ whose $k-$th order partial derivatives are locally H\"older continuous with exponent $\alpha$ (locally Lipschitz continuous if $\alpha=1$). We also agree on $C^{0,\alpha}=C^\alpha$ and $C^{0,1}=\mathrm{Lip}$. Throughout this paper, $\Omega$ is always a bounded domain in $\mathbb{C}^n$ unless it is specially emphasized.
Our first main result is the following

\begin{theorem}\label{th:Main}
$K_p(\cdot)\in C^{1,1}_{\rm loc}(\Omega)$\/ for $1\leq{p}<\infty$\footnote{In an earlier version of the paper, the Theorem is stated for $p>1$. We are indebted to the referee for pointing out that the arguments remain valid for $p=1$.}.  Moreover,  we have
\begin{equation}\label{eq:partial derivative}
\frac{\partial{K_p}}{\partial{x}_j}(z)=p\,\mathrm{Re}\,\frac{\partial{K_p(\cdot,z)}}{\partial{x_j}}\bigg|_z\ \ \ \text{and}\ \ \ \frac{\partial{K_p}}{\partial{z}_k}(z)=\frac{p}{2} \cdot \frac{\partial{K_p(\cdot,z)}}{\partial{z_k}}\bigg|_z,
\end{equation}
for $1\le j\le 2n$ and $1\leq{k}\leq{n}$. Here $(x_1,\cdots,x_{2n})$ and $(z_1,\cdots,z_n)$ denote the real and complex coordinates in $\mathbb R^{2n}=\mathbb C^n$ respectively.
\end{theorem} 

\begin{remark}

For saving symbols, we write $\partial{K_p(\cdot,z)}/\partial{z_k}|_z$ instead of $\partial{K_p(\zeta,z)}/\partial{\zeta_k}|_{\zeta=z}$, and so on.

\end{remark}

In view of Rademacher's theorem, the local $C^{1,1}$-regularity of $K_p(\cdot)$ enables us to define the Levi form $i\partial\overline{\partial}\log{K_p}(z;X)$ in the usual sense almost everywhere. This combined with Theorems 5.1 and 5.3  in \cite{CZ} yields
\begin{equation}\label{eq:LeviLower}
i\partial\overline{\partial}\log{K_p}(z;X) \ge \begin{cases}
C(z;X)^2,\ \ \ &1<p<2,\\
B_p(z;X)^2,\ \ \ &2<p<\infty,
\end{cases}
\end{equation}
holds in the usual sense. Here  $C(z;X)$ denotes the Carath\'eodory metric and $B_p(z;X)$ is the $p-$Bergman metric given by
\[
B_p(z;X):=  {K_p(z)^{-\frac1p}}\cdot \sup \left\{|Xf(z)|:f\in A^p(\Omega),f(z)=0,\|f\|_p=1\right\}.
\]

It is natural to expect analogous upper bounds for  $i\partial\overline{\partial}\log{K_p}(z;X)$.  

\begin{theorem}\label{th:Levi_upper_p_le_2}
For $1<p< 2$,  we have
\begin{equation}\label{eq:Levi_upper_0}
i\partial\bar{\partial} \log{K_p}(z;X) \leq \frac{pq}{4} \widehat{B}_p(z;X)^2-(q-1)\frac{|XK_p(z)|^2}{K_p(z)^2},\ \ \ \text{a.e.}\ \ z\in\Omega,
\end{equation}
where $\frac1p+\frac1q=1$
and
\[
\widehat{B}_p(z;X):=K_p(z)^{-1/p}\sup\left\{|Xf(z)|: f\in{A^p(\Omega)},\ \|f\|_p=1\right\}.
\]
\end{theorem}

\begin{remark}
Equality in \eqref{eq:Levi_upper_0} holds for $p=q=2$ (see Proposition     \ref{prop:Equality_p=2}).  
\end{remark}

Regularity of $K_p(\cdot)$ is first considered in \cite{CX}, where local $C^{1,1/2}-$regularity is obtained. A key ingredient in \cite{CX} is the regularity of the function $z\mapsto{K_p(\zeta,z)}$ for fixed $\zeta$, which can be regarded as a problem on  stability of the minimizer in \eqref{eq:MinProb} with respect to the parameter $z$. More precisely, we have $K_p(\cdot)\in C_{\rm loc}^{1,\alpha}(\Omega)$  if one can show that 
for each compact set $S\subset \Omega$ there exists a constant $C$ such that 
\begin{equation}\label{eq:off_diagonal_alpha}
|K_p(\zeta,z)-K_p(\zeta,z')| \leq C |z-z'|^\alpha,\ \ \  \forall\,\zeta,z,z'\in{S}
\end{equation}
(see, e.g., Lemma \ref{lm:C1/2_K_1} below). \eqref{eq:off_diagonal_alpha} has been verified for $\alpha=1/2$ in Theorem 4.7 in \cite{CZ}. Later, the exponent $\alpha$ is improved to $\alpha=1$ for $1<p\le 2$ and $\alpha=p/(2p-2)$ for $p>2$, in an earlier version of the present paper appeared as arXiv: 2302.06877v1. Very recently, Yinji Li, in his undergraduate thesis  \cite{LiYinji}, obtained that \eqref{eq:off_diagonal_alpha} holds with $\alpha=1-\varepsilon$ for any $\varepsilon>0$.
The proof of Li is also based on the argument in \cite{CZ}. The crucial improvement for $p>2$ essentially follows from an elementary consequence of Cauchy's estimates: for any open set $U$ with $S\subset U\subset\subset \Omega$ and any $s>0$,  there exists a constant $C=C_{p,s,S,U}>0$ such that
$$
|\partial{f}(z)|^s \le C \int_U |f|^s,\ \ \ f\in A^p(\Omega),\ z\in S, 
$$
in place of the weaker inequality 
$
|\partial{f}(z)|^p \lesssim \|f\|_p^p
$
used in the above mentioned arXiv paper. With this replacement,  we are able to prove the following.  

\begin{theorem}\label{th:Holder_off_diagonal}
Let $1<p<\infty$ and $S$ a compact subset of $\Omega$. There exists a constant $C>0$ depending on $p$ and $S$ such that
\begin{equation}\label{eq:off-Holder}
|K_p(\zeta,z)-K_p(\zeta,z')| \leq C|z-z'|,\ \ \  \forall\,\zeta,z,z'\in{S}.
\end{equation}
The same conclusion also holds for $m_p(\cdot,\cdot)$.
\end{theorem}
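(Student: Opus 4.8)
The plan is to reduce the statement to a single interior $L^p$-stability estimate for the extremal functions and then to pass from $L^p$ to pointwise bounds by holomorphicity. Since $K_p(\zeta,z)=m_p(\zeta,z)K_p(z)$ and, on the compact set $S$, the diagonal kernel $K_p$ is bounded between two positive constants and locally Lipschitz (by Cauchy's estimates), the assertions for $K_p(\cdot,\cdot)$ and for $m_p(\cdot,\cdot)$ are equivalent; both will follow from
\[
\|m_p(\cdot,z)-m_p(\cdot,z')\|_p\le C|z-z'|^\alpha,\qquad z,z'\in S.
\]
Granting this, fix $\zeta\in S$ and a ball $B(\zeta,r)\Subset\Omega$ with $r$ comparable to $\mathrm{dist}(S,\partial\Omega)$; since $m_p(\cdot,z)-m_p(\cdot,z')$ is holomorphic, the subharmonicity of its $p$-th power gives $|m_p(\zeta,z)-m_p(\zeta,z')|\le C_r\|m_p(\cdot,z)-m_p(\cdot,z')\|_p$, so the pointwise H\"older bound is inherited from the $L^p$ one with a constant uniform in $\zeta\in S$.

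For the $L^p$-estimate I would compare the two minimizers $f:=m_p(\cdot,z)$ and $g:=m_p(\cdot,z')$ through the uniform convexity of $A^p(\Omega)$. Normalizing $g$ to $u:=g/g(z)$, which satisfies $u(z)=1$ and is thus admissible in \eqref{eq:MinProb} at $z$, the minimality of $f$ combined with Clarkson's inequalities yields a bound of the shape
\[
\|u-f\|_p\lesssim E^{\theta},\qquad E:=\|u\|_p^p-m_p(z)^p\ \ (\ge0),
\]
with $\theta=1/p$ in the regime $p\ge2$ (first Clarkson inequality) and $\theta=1/2$ in the regime $1<p\le2$ (second Clarkson inequality / power-$2$ uniform convexity); since $\|u-g\|_p=O(|z-z'|)$, the quantity $\|f-g\|_p$ is controlled by $E^{\theta}+|z-z'|$. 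The crucial feature that makes the linear term in the expansion of $E$ disappear is the first-order optimality of $f$, which is precisely the reproducing property \eqref{eq:RPF} of the extremal function.

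The heart of the matter—and the step I expect to be the main obstacle—is to bound the excess $E$ by the correct power of $|z-z'|$. Writing $g(z)=m_p(z,z')=K_p(z,z')/K_p(z')$ and using $g(z')=1$, the naive bounds $|g(z)-1|=O(|z-z'|)$ and $|m_p(z)^p-m_p(z')^p|=O(|z-z'|)$ give only $E=O(|z-z'|)$, hence a suboptimal exponent ($1/p$, resp. $1/2$); this is essentially the level reached in \cite{CX}. Obtaining the sharp exponent $\alpha$ in \eqref{eq:alpha} requires showing that the excess is of strictly higher order, and for this I would feed back the interior regularity of the diagonal kernel already available from Theorem \ref{th:Main}: through the identity \eqref{eq:partial derivative}, the map $z\mapsto\partial_\zeta m_p(\zeta,z)\big|_{\zeta=z}$ inherits the $C^{0,\alpha}$-modulus of $\nabla K_p$, and I would use this, together with the symmetric competitor $f/f(z')$ at $z'$, to cancel the first-order contribution to $E$. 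Carrying out this cancellation against the precise form of Clarkson's inequality is the delicate point, and it is here that the two expressions for $\alpha$ in \eqref{eq:alpha} are expected to originate: the power-$2$ convexity available for $1<p\le2$ should upgrade a higher-order excess all the way to the Lipschitz bound $\alpha=1$, whereas for $p>2$ only the weaker power-$p$ convexity is at hand, and the balance between it and the order of the excess is what I expect to produce $\alpha=p/(2p-2)$.

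Finally, uniformity of $C$ over $S$ is obtained by performing every interior estimate on a fixed neighborhood $S'\Subset\Omega$ and invoking the uniform positivity and boundedness of $K_p$ on $S$. The single genuine difficulty is the sharp excess estimate: getting the optimal order of $E$ rather than the naive first-order bound is precisely what separates the present theorem from the weaker $C^{1,1/2}$-type results, and I expect the bulk of the work to be concentrated there.
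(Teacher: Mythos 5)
Your proposal correctly identifies where the difficulty sits (the sharp estimate of the excess), but the way you propose to resolve it does not work, for two separate reasons.

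First, the appeal to Theorem \ref{th:Main} is circular. In the paper the logical order is the opposite of what you assume: the interior $C^{1,\alpha}$ regularity of the diagonal kernel and the identity \eqref{eq:partial derivative} are \emph{deduced from} Theorem \ref{th:Holder_off_diagonal} (via Lemma \ref{lm:C1/2_K_1} and the computation in \S 5.1), so you cannot feed $\nabla K_p\in C^{0,\alpha}$ back into the proof of the off-diagonal estimate. The paper's substitute for your ``cancellation of the first-order contribution'' is Proposition \ref{prop:H_p_upper}: writing the symmetrized excess $\widehat{H}_p(z,z')$ as $\mathrm{Re}\{(K_p(z)^{p-2}L_z-K_p(z')^{p-2}L_{z'})(K_p(\cdot,z)-K_p(\cdot,z'))\}$ and estimating the operator norm $\|K_p(z)^{p-2}L_z-K_p(z')^{p-2}L_{z'}\|\lesssim|z-z'|$ by Cauchy's estimates. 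This yields the \emph{self-improving} bound $\widehat{H}_p\lesssim|z-z'|\cdot\|K_p(\cdot,z)-K_p(\cdot,z')\|_p$, in which the excess is controlled by $|z-z'|$ times the very quantity being estimated; combined with the $p$-Laplacian-type inequalities \eqref{eq:ineq_p_leq2}--\eqref{eq:ineq_p_geq2_2} this bootstraps to $\|K_p(\cdot,z)-K_p(\cdot,z')\|_p\lesssim|z-z'|$ for $1<p\le2$. Nothing of this sort appears in your outline, and without it (or an independent proof of the diagonal $C^{1,1}$ regularity) your excess $E$ cannot be shown to be of higher than first order.

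Second, and more fundamentally, your reduction to an $L^p$-stability estimate cannot produce the exponent $p/(2p-2)$ when $p>2$. Even granting $E=O(|z-z'|^2)$, the $p$-uniform convexity of $L^p$ for $p>2$ gives at best $\|f-u\|_p\lesssim E^{1/p}=O(|z-z'|^{2/p})$, and $2/p<p/(2p-2)$ for every $p>2$ since $p^2-4p+4=(p-2)^2>0$; a pointwise bound inherited from the $L^p$ norm by the mean-value inequality can then never exceed the exponent $2/p$. The paper in fact only proves $\|K_p(\cdot,z)-K_p(\cdot,z')\|_p\lesssim|z-z'|^{1/(p-1)}$ in this range, and reaches $\alpha=p/(2p-2)$ by a mechanism absent from your plan: the \emph{weighted} $L^2$ estimate \eqref{eq:H_p_Holder_2}, which gives $\int(|K_p(\cdot,z)|^{p-2}+|K_p(\cdot,z')|^{p-2})|K_p(\cdot,z)-K_p(\cdot,z')|^2\lesssim|z-z'|^{p/(p-1)}$, followed by the Demailly--Koll\'ar semicontinuity theorem, which provides a uniform bound $\int_U|K_p(\cdot,z)|^{-c}\le M$ and allows one to strip off the weight via H\"older's inequality at the cost of lowering the integrability exponent. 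The factor $\tfrac12$ in $\alpha=\tfrac12\cdot\tfrac{p}{p-1}$ comes from taking the square root of this weighted $L^2$ bound, not from any unweighted $L^p$ estimate. You would need to incorporate both the self-improving excess bound and this weighted-norm/log-canonical-threshold step to recover the stated exponents.
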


It follows from Theorem \ref{th:Holder_off_diagonal} and Rademacher's theorem that $w\mapsto{K_p(z,w)}$ is differentiable almost everywhere. On the other hand, Theorem \ref{th:Main} yields the following

\begin{corollary}\label{cor:differentiable_z_z}
The function $w\mapsto{K_p(z,w)}$ is differentiable at $w=z$.
\end{corollary}

The case $p=1$ is  important among $p\in [1,\infty)$ since only $K_1$ shares the same transformation rule as $K_2$ under biholomorphic mappings on \textit{arbitrary}  domains (see \cite{CZ},  Proposition 2.7). We have the following observation

\begin{corollary}\label{cor:Image}
The function
$$
z\mapsto\left.\frac{\partial{K_1(z,\cdot)}}{\partial{x_j}}\right|_z
$$
is differentiable and its image lies on the imaginary axis.  
\end{corollary}

Simple example of the unit disc shows that the image of 
$$
z\mapsto\left.\frac{\partial{K_p(z,\cdot)}}{\partial{x_j}}\right|_z
$$
is the whole complex plane for any $p>1$ (see \S\,5.2). This phenomenon may be viewed a basic difference of the $p-$Bergman theory between $p=1$ and $p>1$. 

The global regularity of $K_p(\zeta,z)$ is surprisingly bad in general.  A celebrated  result of Kerzman \cite{Kerzman} states that $K_2(\zeta,z)$ is $C^\infty$ on $\overline{\Omega}\times \overline{\Omega}\setminus\{\zeta=z\}$ if $\Omega$ is a bounded strongly pseudoconvex domain with $C^\infty-$boundary.   For bounded complete Reinhardt domains,   $K_2(\cdot,z)$ is even holomorphic in a neighborhood of $\overline{\Omega}$ for each $z\in \Omega$ (cf.  \cite{BB}).  In sharp contrast with these facts and Theorem \ref{th:Holder_off_diagonal},  we have the following 

\begin{theorem}\label{th:boundary-regularity}
Let $\Omega\subset \mathbb C^n$ be a bounded complete Reinhardt domain with smooth boundary such that $K_2(\zeta,z)$ is not zero-free.  Then the following properties hold:
\begin{enumerate}
\item[$(1)$] For each $\alpha>0$,  there exist $k\in \mathbb Z^+$ and $z\in \Omega$ such that $K_{2k}(\cdot,z)\notin C^\alpha (\overline{\Omega})$.   
\item[$(2)$] For each $\alpha>0$,  there exists $k\in \mathbb Z^+$ such that either $K_{2k}(\cdot,z)\notin C(\overline{\Omega})$ for some $z\in \Omega$ or $K_{2k}(\zeta,\cdot)\notin C^\alpha_{\rm loc}(\Omega)$ for some $\zeta\in \partial{\Omega}$.
\end{enumerate}
Here  $C^\alpha$ means $C^{0,\alpha}$, i.e., H\"older continuous with exponent $\alpha$.    
\end{theorem}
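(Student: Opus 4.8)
The plan is to exploit the even exponent $p=2k$, which lets us \emph{linearize} the first-order condition of the extremal problem \eqref{eq:MinProb} after raising the minimizer to the $k$-th power, and then to play the resulting rigidity against the hypothesis that $K_2$ vanishes somewhere. Since $K_2$ is not zero-free, fix $z_0\in\Omega$ and $\zeta_0\in\Omega$ with $K_2(\zeta_0,z_0)=0$ (the diagonal $K_2(z,z)$ is strictly positive, so $\zeta_0\neq z_0$); we will study $K_{2k}(\cdot,z_0)=m_{2k}(\cdot,z_0)\,K_{2k}(z_0)$ as $k$ grows.

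First I would record the Euler--Lagrange form of \eqref{eq:MinProb}: the minimizer $m:=m_{2k}(\cdot,z_0)$ satisfies $\int_\Omega|m|^{2k-2}\overline m\,g=0$ for every $g\in A^{2k}(\Omega)$ with $g(z_0)=0$. Because $|m|^{2k-2}\overline m=m^{k-1}\,\overline m^{\,k}$, this reads $\langle m^{k-1}g,\,m^k\rangle_{L^2(\Omega)}=0$, so the function $h:=m^k\in A^2(\Omega)$ is $L^2$-orthogonal to $m^{k-1}\cdot\{g\in A^{2k}:g(z_0)=0\}$, while $h(z_0)=m(z_0)^k=1$.

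The key lemma I would then prove is a rigidity statement: \emph{if $m_{2k}(\cdot,z_0)$ extends continuously to $\overline\Omega$ and is zero-free on all of $\overline\Omega$, then $K_2(\cdot,z_0)=K_2(z_0)\,m_{2k}(\cdot,z_0)^{\,k}$.} Indeed, continuity and zero-freeness on the compact set $\overline\Omega$ give $0<c\le|m|\le C$, so for every polynomial $\phi$ with $\phi(z_0)=0$ the function $g:=\phi/m^{k-1}$ is holomorphic, satisfies $g(z_0)=0$, and obeys $\int_\Omega|g|^{2k}\le c^{-2k(k-1)}\int_\Omega|\phi|^{2k}<\infty$, hence $g\in A^{2k}(\Omega)$; the orthogonality then yields $\langle\phi,h\rangle_{L^2}=0$. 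As $\Omega$ is a bounded complete Reinhardt domain, polynomials are dense in $A^2(\Omega)$ and those vanishing at $z_0$ are dense in $M_{z_0}:=\{\psi\in A^2:\psi(z_0)=0\}$, whence $h\perp M_{z_0}$; since $M_{z_0}^\perp$ is spanned by the Bergman kernel and $h(z_0)=1$, uniqueness forces $h=K_2(\cdot,z_0)/K_2(z_0)$. Now the contradiction is immediate: if $K_{2k}(\cdot,z_0)$ were continuous up to $\overline\Omega$ and zero-free there, the lemma would make $K_2(\cdot,z_0)$ a constant multiple of the zero-free function $m^k$, contradicting $K_2(\zeta_0,z_0)=0$. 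Thus, for every $k\ge2$, $K_{2k}(\cdot,z_0)$ either fails to be continuous on $\overline\Omega$ or must vanish somewhere on $\overline\Omega$; this qualitative alternative is the structural heart of Theorem \ref{th:boundary-regularity}.

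To upgrade this into the quantitative failure of part $(1)$ (no $C^\alpha$ for arbitrarily small $\alpha$) and the dichotomy of part $(2)$, I would let $k\to\infty$ and track the \emph{location and vanishing order} of the forced zeros. Using the holomorphic extension of $K_2(\cdot,z_0)$ past $\overline\Omega$ (the Bell--Boas type phenomenon, \cite{BB}) one knows the zero hypersurface of $K_2(\cdot,z_0)$ meets $\partial\Omega$; the plan is to show that the obstruction above transplants this zero into a forced zero of $K_{2k}(\cdot,z_0)$ that is pushed to the boundary as $k\to\infty$, and to estimate its Hölder order by re-examining when $\phi/m^{k-1}$ still lies in $A^{2k}$ near that point (boundary vanishing of order $\beta$ keeps $g$ admissible precisely while $2k(k-1)\beta$ stays below the integrability threshold of the smooth boundary). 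Part $(2)$ then reflects the two ways the obstruction can surface: either $K_{2k}(\cdot,z_0)$ is genuinely discontinuous at that boundary point, or, if it remains continuous, the forced zero sits at some $\zeta\in\partial\Omega$ and, freezing $\zeta$ while varying the second argument, the zero-set rigidity degrades the modulus of continuity of $K_{2k}(\zeta,\cdot)$ near $z_0$. I expect this last, quantitative, step to be the main obstacle: the rigidity lemma itself is clean, but converting ``$m_{2k}(\cdot,z_0)$ must vanish or be discontinuous on $\overline\Omega$'' into the sharp statement that no fixed Hölder exponent survives as $k\to\infty$ requires uniform control of the extremal function near $\partial\Omega$, and in particular a careful treatment of the degenerate regime in which $m$ vanishes on $\partial\Omega$, where the integrability estimate for $\phi/m^{k-1}$ breaks down and the rigidity lemma no longer applies directly. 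This is presumably exactly what forces the passage to $p\gg1$ and the dichotomous formulation of $(2)$.
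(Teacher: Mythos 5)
Your rigidity lemma is correct and is in fact a boundary-strengthened relative of the tool the paper actually uses (Proposition 3.3 of \cite{CZ}, quoted as Proposition \ref{prop:BR}): evenness of $p=2k$ linearizes the extremal condition, and zero-freeness lets one divide by $m^{k-1}$. But the conclusion you extract from it --- for your chosen $z_0$, the function $K_{2k}(\cdot,z_0)$ is either discontinuous on $\overline{\Omega}$ or vanishes somewhere on $\overline{\Omega}$ --- is strictly weaker than the theorem. A function can be $C^\infty(\overline{\Omega})$ and still vanish on $\overline{\Omega}$ (indeed $m_{2k}(\cdot,z_0)$ may well have \emph{interior} zeros when $K_2(\cdot,z_0)$ does), so no failure of $C^\alpha$ regularity follows. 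The entire quantitative content of $(1)$ and $(2)$ is deferred to a ``plan'' that you yourself flag as the main obstacle, and as described it would not go through: starting from a $z_0$ with an interior zero $\zeta_0\in\Omega$ of $K_2(\cdot,z_0)$ is the wrong configuration, and there is no mechanism in your argument that pushes the forced zeros of $m_{2k}(\cdot,z_0)$ to the boundary or controls their order as $k\to\infty$; the ``integrability threshold'' heuristic for $\phi/m^{k-1}$ is not a proof. (Also, for $n=1$ the zero set of $K_2(\cdot,z_0)$ is discrete and need not meet $\partial\Omega$ at all.)

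The missing idea is the paper's choice of base point and the resulting boundary zero of \emph{finite order}. One takes $z_0\in\partial\mathcal F(\Omega)^\circ\cap\Omega$, where $\mathcal F(\Omega)$ is the (closed) set of $z$ with $K_2(\cdot,z)$ zero-free; then $K_2(\cdot,z_0)$ has no zeros in $\Omega$, so Proposition \ref{prop:BR} applies and gives the exact identity $m_2(\cdot,z_0)=m_{2k}(\cdot,z_0)^k$ on $\Omega$ for all $k$. Approximating $z_0$ by points $z_j\in\mathcal N(\Omega)$ and using the Reinhardt identity $K_2(\lambda\zeta,z)=K_2(\zeta,\bar\lambda z)$ to extend $K_2(\cdot,z_0)$ holomorphically past $\partial\Omega$, the zeros $\zeta_j$ of $K_2(\cdot,z_j)$ accumulate at some $\zeta_0$ which must lie on $\partial\Omega$, and $m_2(\cdot,z_0)$ vanishes at $\zeta_0$ to some finite order $k_0$ along the normal complex line (finite because $m_2(\cdot,z_0)$ is holomorphic across $\partial\Omega$ and nonzero inside). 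If $K_{2k}(\cdot,z_0)\in C^\alpha(\overline{\Omega})$ with $k>k_0/\alpha$, the identity extends to $\overline{\Omega}$ and yields $|\zeta-\zeta_0|^{k_0}\asymp|m_{2k}(\zeta,z_0)|^k\lesssim|\zeta-\zeta_0|^{\alpha k}$, a contradiction; part $(2)$ runs the same comparison in the $z$-variable at the boundary point $\zeta_0$, assuming continuity up to $\overline{\Omega}$ in $\zeta$ to make $K_{2k}(\zeta_0,\cdot)$ meaningful. Your polynomial-density argument could replace the citation of Proposition \ref{prop:BR} only under hypotheses (zero-freeness and continuity on $\overline{\Omega}$) that the argument cannot afford to assume, so as written the proposal does not prove the statement.
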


\begin{remark}
There are plenty of bounded  complete Reinhardt,  smooth strongly pseudoconvex (even convex) domains on which $K_2(\zeta,z)$ is not zero-free (cf.  \cite{Boas}).  
\end{remark}

It is also interesting to study the parameter regualrity of $K_p(z)$ or $K_p(\zeta,z)$, i.e., the dependence in $p$.

\begin{theorem}\label{th:LlogL}
Let $1\le p\le 2$.  If the hyperconvexity index $\alpha(\Omega)$ is positive, then 
$$
|K_s(z)-K_p(z)| \lesssim |s-p||\log |s-p||
$$
 as $s\rightarrow p$ in $[1,2]$.
\end{theorem}

Recall that
\[
\alpha(\Omega):=\sup\left\{\alpha\geq0;\ \exists\,\rho\in{PSH^-(\Omega)}\cap{C(\Omega)},\ \text{s.t.}\ -\rho\lesssim \delta_\Omega^\alpha\right\}
\]
(cf. \cite{Chen17}). The class of domains with $\alpha(\Omega)>0$ includes  bounded pseudoconvex domains with Lipschitz boundaries (cf.  \cite{Harrington}). Under the condition that $\alpha(\Omega)>0$, the continuity of $K_p(z)$ at $p=2$ has been verified in \cite{CZ}; nevertheless, there are examples showing that $p\mapsto{K_p(z)}$ might not be continuous at every $p\in(2,+\infty)$ (see \cite{CX}, Proposition 1.5).

Similar as \cite{CZ} and \cite{CX}, our analysis relies heavily on the reproducing formula \eqref{eq:RPF}, which follows from the calculus of variations. In order to handle the nonlinear factor $|m_p(\cdot,z)|^{p-2}$ in \eqref{eq:RPF}, we borrow some techniques from the nonlinear analysis of $\Delta_p$ (cf. \cite{Lindqvist06}). Various complex-analytic tools are also used, for example, semi-continuity property of the log canonical threshold (cf.  \cite{DK}), the Borel-Carath\'{e}odory inequality, as well as $L^2$ estimates for the $\overline{\partial}-$operator.

To conclude this introduction,  we remark that differentiability of $K_p(\cdot)$ for $p>1$ also follows from  the abstract theory on  differentiability of norms in Banach spaces.  A key observation is the fact $K_p(z)^\frac1p = \|L_z\|$, where $L_z(f):=f(z)$ is the evaluation functional on $A^p(\Omega)$. The map $z\mapsto{L_z}$ is weakly holomorphic in the sense that $z\mapsto{L_z(f)=f(z)}$ is a holomorphic function for every $f\in{A^p(\Omega)}$,  which is  strongly holomorphic in the sense of Taylor \cite{Taylor}.  In particular, it is Fr\'{e}chet differentiable.  On the other hand,  since $A^p(\Omega)$ is uniformly convex in view of the Clarkson's inequality,  it follows from a theorem of Lindenstrauss \cite{Lindenstrauss} that the operator norm $\|\cdot\|$ in $(A^p(\Omega))^*$ is Fr\'{e}chet differentiable except at $0$,  which implies that  $K_p(\cdot)$ is differentiable.   But the above method does not give the explicit formula  \eqref{eq:partial derivative}.

\section{Preliminaries}\label{sec:preliminary}

\subsection{Some inequalities}

Let us recall  some "elementary" inequalities for  $a,b\in \mathbb C$,  which arise from the nonlinear analysis of the $p-$Laplacian (cf.  \cite{Lindqvist06},  \S\,12;  see also \cite{CZ},  Appendix): 
\begin{eqnarray}
\mathrm{Re}\left\{(|b|^{p-2}\overline{b}-|a|^{p-2}\overline{a})(b-a)\right\}
&\geq& (p-1)(|a|+|b|)^{p-2}|b-a|^2\nonumber\\
& & +(2-p)|\mathrm{Im}(a\overline{b})|^2(|a|+|b|)^{p-4},\ \ \ 1\leq{p} \leq2,\label{eq:ineq_p_leq2}\\
\mathrm{Re}\left\{(|b|^{p-2}\overline{b}-|a|^{p-2}\overline{a})(b-a)\right\}
&\geq& \frac{1}{2}\left(|a|^{p-2}+|b|^{p-2}\right)|b-a|^2\ \ \ p>2,\label{eq:ineq_p_geq2_1}
\end{eqnarray}
\begin{eqnarray}
|b|^p & \geq &  |a|^p+p\mathrm{Re}\left\{|a|^{p-2}\overline{a}(b-a)\right\}+A_p|b-a|^2(|a|+|b|)^{p-2},\ \ \ 1<p<2,\label{eq:ineq_leq2}\\
|b|^p & \geq & |a|^p+p\mathrm{Re}\left\{|a|^{p-2}\overline{a}(b-a)\right\}+{4^{-p-3}}|b-a|^p,\ \ \ p\ge 2,\label{eq:ineq_geq2}\\
|b| & \geq & |a|+\mathrm{Re}\left\{|a|^{-1}\overline{a}(b-a)\right\}+B_1|\mathrm{Im}(\overline{a}b)|^2(|a|+|b|)^{-3},\label{eq:ineq_q_equal_1}
\end{eqnarray}
\begin{eqnarray}
\left||b|^{p-2}\overline{b}-|a|^{p-2}\overline{a}\right|  & \leq & 2^{2-p}|b-a|^{p-1},\ \ \ 1<p<2,\label{eq:ineq_leq2_2}\\
\left||b|^{p-2}\overline{b}-|a|^{p-2}\overline{a}\right| &\leq&  (p-1)|b-a|\left(|a|+|b-a|\right)^{p-2},\ \ \ p\ge 2.\label{eq:ineq_geq2_2}
\end{eqnarray} 
Here $A_p:=p(p-1)/2$ and $B_1>0$ is a numerical constant. 

 \eqref{eq:ineq_leq2} together with \eqref{eq:ineq_geq2} and \eqref{eq:ineq_q_equal_1}
yield the following
\begin{eqnarray}
|a|^p &\leq& |b|^p+p\mathrm{Re}\left\{|a|^{p-2}\overline{a}(a-b)\right\}\label{eq:ineq_geq1_1}\\
&\leq& |b|^p+p|a|^{p-1}|b-a|,\ \ \ \forall\,1\leq{p}<\infty.\label{eq:ineq_geq1}
\end{eqnarray}
Moreover,  the trivial inequality
\begin{equation}\label{eq:ineq_elementary}
|a+b|^p\leq\max\{1,2^{p-1}\}\left(|a|^p+|b|^p\right),\ \ \ 0<p<\infty,
\end{equation}
will also be frequently used.

We also need the following refinement of \eqref{eq:ineq_geq2},  which might be useful for other purposes. 

\begin{lemma}\label{lm:ineq}
Let $q>2$. For any $a,b\in\mathbb{C}$, we have
\begin{eqnarray}
|b|^q &=& |a|^q + q\,\mathrm{Re}\left\{|a|^{q-2}\overline{a}(b-a)\right\}+\frac{q^2}{4}|a|^{q-2}|b-a|^2\nonumber\\
& & +\frac{q(q-2)}{4}|a|^{q-4}\mathrm{Re}\left\{\overline{a}^2(b-a)^2\right\}+R_q(a,b),\label{eq:ineq_geq4}
\end{eqnarray}
where the remaining term $R_q(a,b)$ is bounded by a sum of functions of the following type:
\[
C(|a|^\alpha+|b-a|^\alpha)|b-a|^\beta
\]
for suitable constants $C>0$, $\alpha\ge 0$ and $\beta>2$ with $\alpha+\beta=q$.
\end{lemma}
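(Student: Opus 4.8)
The plan is to read \eqref{eq:ineq_geq4} as a one-variable Taylor expansion with an explicit bound on the remainder. Set $h := b-a$ (the case $h=0$ being trivial), assume first $a\ne 0$, and consider $\phi(t) := |a+th|^q$ for $t\in[0,1]$, so that $\phi(0)=|a|^q$ and $\phi(1)=|b|^q$. Writing $\phi = u^{q/2}$ with $u(t)=|a+th|^2 = |a|^2 + 2t\,\mathrm{Re}(\bar a h) + t^2|h|^2$, a direct computation gives $\phi'(0)=q\,\mathrm{Re}\{|a|^{q-2}\bar a(b-a)\}$, and, after using the identity $(\mathrm{Re}(\bar a h))^2 = \tfrac12\mathrm{Re}(\bar a^2 h^2)+\tfrac12|a|^2|h|^2$, one checks that $\tfrac12\phi''(0)$ equals exactly $\frac{q^2}{4}|a|^{q-2}|h|^2 + \frac{q(q-2)}{4}|a|^{q-4}\mathrm{Re}\{\bar a^2 h^2\}$. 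Thus the first three explicit terms in \eqref{eq:ineq_geq4} are precisely $\phi(0)+\phi'(0)+\tfrac12\phi''(0)$, and it remains to define $R_q(a,b):=\phi(1)-\phi(0)-\phi'(0)-\tfrac12\phi''(0)$ and estimate it.

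Since $q>2$, the map $w\mapsto |w|^q$ is $C^2$ on $\mathbb C=\mathbb R^2$, so $\phi\in C^2[0,1]$ with $\phi''$ absolutely continuous; hence Taylor's theorem with integral remainder applies and gives $R_q(a,b)=\int_0^1 \tfrac{(1-t)^2}{2}\,\phi'''(t)\,dt$. Because $u$ is quadratic in $t$ (so $u'''\equiv 0$), differentiating $\phi=u^{q/2}$ three times and using $u=|z_t|^2$, $|u'|=|2\,\mathrm{Re}(\overline{z_t}h)|\le 2|z_t||h|$ and $u''=2|h|^2$ (with $z_t:=a+th$) yields the pointwise bound $|\phi'''(t)|\le C_q\,|z_t|^{q-3}|h|^3$ for every $t$ with $z_t\ne 0$. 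Consequently $|R_q(a,b)|\le C_q|h|^3\int_0^1 |z_t|^{q-3}\,dt$, and the whole matter reduces to controlling this last integral.

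Estimating $\int_0^1 |z_t|^{q-3}\,dt$ is the heart of the proof and splits into two regimes. When $q\ge 3$ the exponent is nonnegative and $|z_t|\le |a|+|h|$ gives $\int_0^1 |z_t|^{q-3}dt\le (|a|+|h|)^{q-3}\le C_q(|a|^{q-3}+|h|^{q-3})$, so that $|R_q|\le C_q(|a|^{q-3}+|h|^{q-3})|h|^3$, a single term of the asserted type with $\alpha=q-3\ge 0$, $\beta=3>2$. The delicate case is $2<q<3$, where $q-3\in(-1,0)$ and $|z_t|^{q-3}$ blows up at any $t$ where $z_t=0$; the saving point is that this singularity is integrable \emph{exactly because} $q-3>-1$. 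To extract the correct power of $|h|$ I would complete the square, $|z_t|^2=|h|^2(t-t_0)^2+d^2$ with $t_0=-\mathrm{Re}(\bar a h)/|h|^2$ and $d^2=|\mathrm{Im}(\bar a h)|^2/|h|^2\ge 0$, and then, since $q-3<0$, bound $|z_t|^{q-3}\le |h|^{q-3}|t-t_0|^{q-3}$ and use $\int_0^1 |t-t_0|^{q-3}dt\le C_q$ (finite uniformly in $t_0$, as $q-3>-1$); this gives $\int_0^1|z_t|^{q-3}dt\le C_q|h|^{q-3}$ and hence $|R_q|\le C_q|h|^q$, again of the required type with $\alpha=0$, $\beta=q>2$. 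The case $a=0$ is handled directly: then $\phi(1)=|h|^q$ while each explicit term carries a positive power of $|a|$ once $q>2$ (the term $|a|^{q-4}\mathrm{Re}\{\bar a^2 h^2\}$ being $O(|a|^{q-2})$), so all vanish and $R_q=|h|^q$; alternatively this follows by continuity. I expect the only genuine obstacle to be the $2<q<3$ integral, namely confirming integrability of the singular $\phi'''$ and squeezing out precisely the factor $|h|^{q-3}$, and the completing-the-square substitution is what makes that step clean.
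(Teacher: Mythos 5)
Your proof is correct, and it reaches \eqref{eq:ineq_geq4} by a genuinely different treatment of the remainder than the paper. Both arguments start from the same function $\kappa(t)=|a+t(b-a)|^q$, and your computation of $\phi'(0)$ and $\tfrac12\phi''(0)$ (via the identity $(\mathrm{Re}\,c)^2=\tfrac12\mathrm{Re}(c^2)+\tfrac12|c|^2$) agrees with the paper's. The divergence is in how the error is controlled: the paper stops at the second-order integral remainder $\int_0^1(1-t)\kappa''(t)\,dt$, splits $\kappa''(t)$ into the two terms $I$ and $II$, and estimates $I-I|_{t=0}$ and $II-II|_{t=0}$ by the difference inequalities \eqref{eq:ineq_leq2_2}--\eqref{eq:ineq_geq2_2} applied to the map $w\mapsto|w|^{q/2-2}\overline{w}$ at $w=(a+t(b-a))^2$ --- so it never differentiates a third time and never meets a singular integrand. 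You instead go to third order, bound $|\phi'''(t)|\le C_q|a+th|^{q-3}|h|^3$ pointwise, and integrate; for $2<q<3$ this forces you to confront the non-$C^3$ points and the integrable singularity $|t-t_0|^{q-3}$, which you handle correctly by noting that $\phi''$ is still absolutely continuous (near a zero $t_0$ of $a+th$ one has $\phi''=q(q-1)|h|^q|t-t_0|^{q-2}$, whose derivative is integrable since $q-3>-1$) and by the completing-the-square bound $|a+th|\ge|h||t-t_0|$. Your resulting remainder bounds ($C_q|h|^q$ for $2<q<3$, $C_q(|a|^{q-3}+|h|^{q-3})|h|^3$ for $q\ge3$) are of the required form $C(|a|^\alpha+|b-a|^\alpha)|b-a|^\beta$ with $\beta>2$ and $\alpha+\beta=q$, so the lemma follows. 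The trade-off: the paper's route is entirely elementary (only continuous quantities and the stock inequalities from Section 2 appear), while yours is the more systematic Taylor-with-integral-remainder argument but requires the extra regularity discussion; in the singular range your bound $|R_q|\lesssim|h|^q$ is actually cleaner than the paper's two-term estimate.
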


The proof is elementary but lengthy. We shall postpone it to the appendix. We write $q$ instead of $p$ in Lemma \ref{lm:ineq} since it is used in the proof of Theorem \ref{th:Levi_upper_p_le_2} with $1<p<2$ and $1/p+1/q=1$.

\subsection{Some notations on derivatives}

We identify $\mathbb{C}^n$ with $\mathbb{R}^{2n}$ by setting $z_j=x_j+ix_{n+j}$.  A real (tangent) vector $v$ of $\mathbb{R}^{2n}$ is given by $v=\sum^{2n}_{j=1}v_j\partial/\partial{x_j}$.  It can be identified with a complex tangent vector $X=\sum^n_{j=1}X_j\partial/\partial{z}_j$ as follows:
$$
X_j=v_j+iv_{n+j}.
$$
We denote by $iv$ the real vector corresponding to $iX$, i.e.,
\[
iv=-\sum^n_{j=1}v_{n+j}\frac{\partial}{\partial{x_j}}+\sum^n_{j=1}v_j\frac{\partial}{\partial{x_{n+j}}}.
\]
In what follows,    $e^{i\theta}v$  means the real vector $\cos\theta\cdot{v}+\sin\theta\cdot{iv}$. The corresponding complex vector is then denoted by $e^{i\theta}X$.

For a $C^1$ function $u$ on a bounded domain $\Omega\subset\mathbb{C}^n=\mathbb{R}^{2n}$ and a real (tangent) vector $v$,  we denote by $\partial{u}/\partial{v}$ the real directional derivative along $v$,  that is,
\[
\frac{\partial{u}}{\partial{v}}(z):=\sum^{2n}_{j=1}v_j\frac{\partial{u}}{\partial{x_j}}(z)=\lim_{\mathbb R\ni t\rightarrow 0} \frac{u(z+tv)-u(t)}{t}.
\]
If we denote the real gradient of $u$ by $\nabla{u}$, then
\[
\frac{\partial{u}}{\partial{v}}(z)=\langle{\nabla{u}(z),v}\rangle,
\]
where $\langle{\cdot,\cdot}\rangle$ is the inner product in $\mathbb{R}^{2n}$. For a complex (tangent) vector $X=\sum^n_{j=1}X_j\partial/\partial{z_j}$,  we define 
\[
Xu(z):=\sum^n_{j=1}X_j\frac{\partial{u}}{\partial{z_j}}(z)\ \ \ \text{and}\ \ \ \overline{X}u:=\sum^n_{j=1}\overline{X}_j\frac{\partial{u}}{\partial\bar{z}_j}(z),
\]
where
\[
\frac{\partial{u}}{\partial{z_j}}=\frac{1}{2}\left(\frac{\partial{u}}{\partial{x_j}}-i\frac{\partial{u}}{\partial{x_{n+j}}}\right),\ \ \ \frac{\partial{u}}{\partial\bar{z}_j}=\frac{1}{2}\left(\frac{\partial{u}}{\partial{x_j}}+i\frac{\partial{u}}{\partial{x_{n+j}}}\right).
\]
If the complex tangent vector  $X$  represents  the real tangent vector $v$,  then 
\[
Xu=\frac{1}{2}\left(\frac{\partial{u}}{\partial{v}}-i\frac{\partial{u}}{\partial(iv)}\right),\ \ \ \overline{X}u=\frac{1}{2}\left(\frac{\partial{u}}{\partial{v}}+i\frac{\partial{u}}{\partial(iv)}\right).
\]
In particular, if $u$ is a holomorphic function, i.e., $\overline{X}u=0$ for all complex vector $X$, then 
\begin{equation}\label{eq:complex_real_derivative_holomophic}
Xu=\frac{\partial{u}}{\partial{v}}.
\end{equation}

 It follows from Rademacher's theorem that if $u\in C^{1,1}_{\rm loc}$ then the Hessian $D^2u$ exists almost everywhere, and is locally bounded on $\Omega$.
Thus we may define the Levi form as follows:
\begin{equation}\label{eq:Levi_def}
i\partial\bar{\partial} u(z;X):=X\overline{X}u=\sum^n_{j,k=1}\frac{\partial^2u}{\partial{z_j}\partial\bar{z}_k}X_j\overline{X}_k,\ \ \ \text{a.e.}\ z\in \Omega,\ \forall\,X.
\end{equation}
For a general psh function $u$  on $\Omega$,  $i\partial\bar{\partial}u$ only exists as a current. More precisely,  $i\partial\bar{\partial} u(\cdot;X)$ is a positive measure.  Set
\[
T_r u(z;X):=\frac{1}{r^2}\left(\frac{1}{2\pi}\int^{2\pi}_0u(z+re^{i\theta}X)d\theta-u(z)\right),\ \ \ r>0.
\]
It is known that $T_ru(\cdot;X)$ converges to $i\partial\bar{\partial} u(\cdot;X)$  in the sense of weak convergence of measures as $r\rightarrow 0+$ (cf. \cite{BT76},  Proposition 6.3). Moreover, if $u$ is a locally $C^{1,1}$  psh function,  then the current $i\partial\bar{\partial}u$ coincides with the Levi form given by \eqref{eq:Levi_def} (see e.g., \cite{Blocki}, Theorem 1.1.11).

\begin{lemma}\label{lm:Levi_upper_estimate}
Let $u$ be a $C^{1,1}_{\rm loc}$ psh function on a domain $\Omega\subset\mathbb{C}^n$.  If there is a continuous function $A=A(z,X)$ on $\Omega\times\mathbb{C}^n$ such that $T_ru(z;X)\leq{A(z,X)}$ for all $z,X,r$,  then
\[
i\partial\bar{\partial}u(z;X)\leq{A(z,X)},\ \ \ \text{a.e.}\ z\in \Omega.
\]
\end{lemma}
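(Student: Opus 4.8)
The plan is to reduce the inequality to a pointwise limit taken at the (almost every) point where $u$ is twice differentiable. Since $u\in C^{1,1}_{\rm loc}(\Omega)$, its gradient $\nabla u$ is locally Lipschitz, so by Rademacher's theorem (as already recalled above) the Hessian $D^2u(z)$ exists for almost every $z\in\Omega$; at each such point $z$ one has the second order Taylor expansion
\[
u(z+w)=u(z)+\langle\nabla u(z),w\rangle+\tfrac12\langle D^2u(z)w,w\rangle+o(|w|^2),\qquad w\to0,
\]
which follows from the differentiability of $\nabla u$ at $z$ by integrating $\langle\nabla u(z+tw),w\rangle$ over $t\in[0,1]$. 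Fix one such point $z$. I would first rewrite the quadratic term in complex form, using the standard decomposition of the real Hessian quadratic form,
\[
\tfrac12\langle D^2u(z)w,w\rangle=\mathrm{Re}\sum_{j,k}\frac{\partial^2u}{\partial z_j\partial z_k}(z)\,w_jw_k+\sum_{j,k}\frac{\partial^2u}{\partial z_j\partial\bar z_k}(z)\,w_j\bar w_k,
\]
so that the mixed second term is exactly $i\partial\bar\partial u(z;X)$ once we set $w=X$.

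Next I would substitute $w=re^{i\theta}X$, i.e. $w_j=re^{i\theta}X_j$, and average over $\theta$. The linear term then carries a factor $e^{i\theta}$ and the holomorphic Hessian term a factor $e^{2i\theta}$, both of which integrate to zero over $[0,2\pi]$, while the mixed term equals $\sum_{j,k}u_{z_j\bar z_k}(z)\,w_j\bar w_k=r^2\,i\partial\bar\partial u(z;X)$, which is independent of $\theta$. Hence
\[
\frac1{2\pi}\int_0^{2\pi}u(z+re^{i\theta}X)\,d\theta-u(z)=r^2\,i\partial\bar\partial u(z;X)+o(r^2),
\]
and dividing by $r^2$ yields the pointwise convergence $T_ru(z;X)\to i\partial\bar\partial u(z;X)$ as $r\to0+$, for every $X$. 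This is the pointwise refinement, valid at points of twice differentiability, of the weak convergence of measures recalled before the lemma.

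Finally, the hypothesis $T_ru(z;X)\le A(z,X)$ holds for all $r>0$; letting $r\to0+$ at any point $z$ of twice differentiability gives $i\partial\bar\partial u(z;X)\le A(z,X)$ at that $z$ and for every $X$ simultaneously. Since the set of such $z$ has full measure in $\Omega$ and is independent of $X$, this proves the claim in the form asserted (almost every $z$, all $X$).

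The only point requiring care is that the $o(|w|^2)$ remainder of the Taylor expansion survives the angular averaging as $o(r^2)$; this is immediate from genuine (pointwise) twice differentiability, which is precisely what Rademacher's theorem supplies for the Lipschitz field $\nabla u$, so there is no substantial obstacle. I note in passing that in this pointwise argument the continuity of $A$ plays no role — only the everywhere bound on $T_ru$ is used; continuity would be needed were one instead to pass to the limit against test functions using the weak convergence of measures, since that route produces an exceptional null set depending on $X$ and one must then invoke a countable dense family of vectors together with the continuity of $A$.
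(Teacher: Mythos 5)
Your argument is correct, but it is not the route the paper takes. The paper's proof is distributional: it tests the hypothesis against nonnegative $\phi\in C_0(\Omega)$, invokes the Bedford--Taylor weak convergence $T_ru(\cdot;X)\to i\partial\bar\partial u(\cdot;X)$ in the sense of measures, concludes $i\partial\bar\partial u(\cdot;X)\le A(\cdot,X)$ as positive measures for each fixed $X$, and then uses the identification of the current with the a.e.\ defined Levi form for $C^{1,1}$ psh functions. You instead prove the \emph{pointwise} convergence $T_ru(z;X)\to i\partial\bar\partial u(z;X)$ at every point of second-order differentiability, obtained from Rademacher's theorem applied to the locally Lipschitz field $\nabla u$ together with the second-order Taylor expansion and the vanishing of the $\int_0^{2\pi}e^{i\theta}\,d\theta$ and $\int_0^{2\pi}e^{2i\theta}\,d\theta$ averages. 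Your route is more elementary and self-contained (no appeal to weak convergence of measures or to plurisubharmonicity of $u$ at all), it dispenses with the continuity hypothesis on $A$, and it produces a single full-measure set of $z$ that works simultaneously for all $X$ --- whereas the measure-theoretic route gives an exceptional null set for each fixed $X$ and, as you correctly observe, would require a countable dense family of directions plus continuity of both sides in $X$ to upgrade to the uniform statement. The paper's proof is shorter given the cited machinery, but yours is arguably cleaner as a matter of logic; the only point you pass over lightly is that the a.e.\ Hessian $D^2u(z)$ is symmetric a.e.\ (so that the real--complex decomposition of the quadratic form is the standard one), which is harmless since the quadratic form only sees the symmetric part.
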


\begin{proof}
For any $\phi\in{C_0(\Omega)}$ with $\phi\geq0$,  we have
\[
\int_\Omega\phi\cdot{i\partial\bar{\partial}u(\cdot;X)}= \lim_{r\rightarrow0}\int_\Omega\phi\cdot{T_ru(\cdot;X)}\leq\int_\Omega\phi\cdot{A(\cdot,X)}.
\]
Thus $i\partial\bar{\partial}u(\cdot;X)\leq{A(\cdot,X)}$ as positive measures,  from which the assertion immediately follows.  
\end{proof}

\section{Interior regularity of $K_p(\zeta,z)$}\label{sec:interior_off}

Let $1\le p <\infty$ and $z,z'\in\Omega$.  We have
\begin{eqnarray}\label{eq:H_integral}
& &\int_\Omega\left(|K_p(\cdot,z)|^{p-2}\overline{K_p(\cdot,z)}-|K_p(\cdot,z')|^{p-2}\overline{K_p(\cdot,z')}\right)\big(K_p(\cdot,z)-K_p(\cdot,z')\big)\nonumber\\
&=& \int_\Omega|K_p(\cdot,z)|^p + \int_\Omega|K_p(\cdot,z')|^p -\int_\Omega|K_p(\cdot,z)|^{p-2}\overline{K_p(\cdot,z)}K_p(\cdot,z') \nonumber\\
&& - \int_\Omega|K_p(\cdot,z')|^{p-2}\overline{K_p(\cdot,z')}K_p(\cdot,z)\nonumber\\
&=& K_p(z)^{p-1}+K_p(z')^{p-1}-K_p(z)^{p-2}K_p(z,z')-K_p(z')^{p-2}K_p(z',z)
\end{eqnarray}
in view of \eqref{eq:RPF}.
Set
\begin{equation}\label{eq:H_p}
\widehat{H}_p(z,z'):=K_p(z)^{p-1}+K_p(z')^{p-1}-\mathrm{Re}\left\{K_p(z)^{p-2}K_p(z,z')+K_p(z')^{p-2}K_p(z',z)\right\}.
\end{equation}

\begin{lemma}\label{lm:H_p_Holder}
For any $z,z'\in\Omega$,  we have
\begin{equation}\label{eq:H_p_Holder_0}
\int_\Omega\frac{|\mathrm{Im}(\overline{K_1(\cdot,z)}K_1(\cdot,z'))|^2}{\left(|K_1(\cdot,z)|+|K_1(\cdot,z')|\right)^3} \leq \widehat{H}_1(z,z'),
\end{equation}
\begin{equation}\label{eq:H_p_Holder_1}
\int_\Omega\left(|K_p(\cdot,z)|+|K_p(\cdot,z')|\right)^{p-2}|K_p(\cdot,z)-K_p(\cdot,z')|^2 \leq \frac{\widehat{H}_p(z,z')}{p-1},\ \ \ 1<p\le 2,
\end{equation}
\begin{equation}\label{eq:H_p_Holder_2}
\int_\Omega\left(|K_p(\cdot,z)|^{p-2}+|K_p(\cdot,z')|^{p-2}\right)|K_p(\cdot,z)-K_p(\cdot,z')|^2 \leq 2 \widehat{H}_p(z,z'),\ \ \ p>2,
\end{equation}
\end{lemma}

\begin{proof}
It suffices to substitute $a=K_p(\cdot,z')$ and $b=K_p(\cdot,z)$ into \eqref{eq:ineq_p_leq2} and \eqref{eq:ineq_p_geq2_1},  then take integration over $\Omega$.
\end{proof}

\subsection{The case $1<p\leq2$}
Our start point is the following

\begin{proposition}\label{prop:H_p_upper_p_le_2}
Let $1<p\leq2$. For every compact set $S\subset\Omega$, we have
\[
|\widehat{H}_p(z,z')|\leq{C}|z-z'|\cdot\|K_p(\cdot,z)-K_p(\cdot,z')\|_p,\ \ \ \forall\,z,z'\in{S},
\]
where the constant $C$ depends only on $S$.
\end{proposition}

\begin{proof}
Given $z\in \Omega$,  define  the evaluation functional as follows:
\[
L_z(f):=f(z),\ \ \ f\in{A^p(\Omega)}.
\]
Then we have
\begin{eqnarray}
\widehat{H}_p(z,z') 
&=& \mathrm{Re}\left\{K_p(z)^{p-2}K_p(z,z)-K_p(z)^{p-2}K_p(z,z')\right\}\nonumber\\
& & -\mathrm{Re}\left\{K_p(z')^{p-2}K_p(z',z)-K_p(z')^{p-2}K_p(z',z')\right\}\nonumber\\
&=& \mathrm{Re}\left\{K_p(z)^{p-2}L_z\left(K_p(\cdot,z)-K_p(\cdot,z')\right)\right\}\nonumber\\
& & -\mathrm{Re}\left\{K_p(z')^{p-2}L_{z'}\left(K_p(\cdot,z)-K_p(\cdot,z')\right)\right\}\nonumber\\
&=& \mathrm{Re}\left\{\left(K_p(z)^{p-2}L_z-K_p(z')^{p-2}L_{z'}\right)\left(K_p(\cdot,z)-K_p(\cdot,z')\right)\right\}\label{eq:H_p_decomposition}.
\end{eqnarray}
Note that $L_z$ and $L_{z'}$ are continuous in view of the mean-value inequality.  Moreover, if one fixes some open set $U$ with $S\subset{U}\subset\subset\Omega$, then Cauchy's estimates together with the mean-value inequality give
\begin{equation}\label{eq:H_p_Cauchy}
|L_z(f)-L_{z'}(f)|\leq\sup_U|\partial{f}|\cdot|z-z'|\leq{C_S}|z-z'|\cdot\|f\|_p,
\end{equation}
so that
\[
\|L_z-L_{z'}\| \leq {C_S} |z-z'|,
\]
for $z,z'\in{S}$, where $C_S$ is a generic constant depending only on $S$. 

Since $K_p(\cdot)$ is locally Lipschitz continuous (cf. \cite{CZ}, Proposition 2.11) and
\[
1/|\Omega| \leq K_p(\zeta) \leq C_nd(S,\partial\Omega)^{-2n},\ \ \ \forall\,\zeta\in{S},
\]
we see that
\begin{eqnarray*}
& & \|K_p(z)^{p-2}L_z-K_p(z')^{p-2}L_{z'}\|\\
&\leq& K_p(z)^{p-2}\|L_z-L_{z'}\|+\left|K_p(z)^{p-2}-K_p(z')^{p-2}\right|\cdot\|L_{z'}\|\\
&=& K_p(z)^{p-2}\|L_z-L_{z'}\|+K_p(z')^{1/p}\left|K_p(z)^{p-2}-K_p(z')^{p-2}\right|\\
&\leq & C_S|z-z'|.
\end{eqnarray*}
Thus \eqref{eq:H_p_decomposition} yields
\begin{eqnarray*}
\widehat{H}_p(z,z') 
&\leq& \|K_p(z)^{p-2}L_z-K_p(z')^{p-2}L_{z'}\|\cdot\|K_p(\cdot,z)-K_p(\cdot,z')\|_p\\
&\leq & C_S|z-z'|\cdot\|K_p(\cdot,z)-K_p(\cdot,z')\|_p.
\end{eqnarray*}
\end{proof}

\begin{proof}[Proof of Theorem \ref{th:Holder_off_diagonal} for $1<p\leq2$]
For any $1<p < 2$,  we take $s:=p(2-p)/2$, so that $-2s/p=p-2$ and $2s/(2-p)=p$. By H\"{o}lder's inequality, we have
\begin{eqnarray}\label{eq:Off_Holder}
& &\int_\Omega|K_p(\cdot,z)-K_p(\cdot,z')|^p\\
&=& \int_\Omega\left(|K_p(\cdot,z)|+|K_p(\cdot,z')|\right)^{-s}|K_p(\cdot,z)-K_p(\cdot,z')|^p\left(|K_p(\cdot,z)|+|K_p(\cdot,z')|\right)^s\nonumber\\
&\leq& \left(\int_\Omega\left(|K_p(\cdot,z)|+|K_p(\cdot,z')|\right)^{p-2}|K_p(\cdot,z)-K_p(\cdot,z')|^2\right)^{p/2}\nonumber\\
& &\times\left(\int_\Omega\left(|K_p(\cdot,z)|+|K_p(\cdot,z')|\right)^p\right)^{1-p/2}\nonumber.
\end{eqnarray}
Note that the previous inequality holds trivially for $p=2$.  On the other hand,  
 \eqref{eq:ineq_elementary} gives
\begin{eqnarray*}
\int_\Omega\left(|K_p(\cdot,z)|+|K_p(\cdot,z')|\right)^p &\leq& 2^{p-1}\int_\Omega\left(|K_p(\cdot,z)|^p+|K_p(\cdot,z')|^p\right)\\
&\leq& 2^{p-1}\left(K_p(z)^{p-1}+K_p(z')^{p-1}\right).
\end{eqnarray*}
This together with \eqref{eq:H_p_Holder_1}, \eqref{eq:Off_Holder} and Proposition \ref{prop:H_p_upper_p_le_2} yield 
\[
\|K_p(\cdot,z)-K_p(\cdot,z')\|_p^p\leq  C_S\frac{|z-z'|^{p/2}}{(p-1)^{p/2}}\|K_p(\cdot,z)-K_p(\cdot,z')\|_p^{p/2}
\]
where $C_S$ is a generic constant depending only on  $S$. Thus
\begin{equation}\label{eq:norm-Lip}
\|K_p(\cdot,z)-K_p(\cdot,z')\|_p\leq C_S\frac{|z-z'|}{p-1},
\end{equation}
so that
$$
|K_p(\zeta,z)-K_p(\zeta,z')|\le K_p(\zeta)^{1/p} \|K_p(\cdot,z)-K_p(\cdot,z')\|_p\leq C_S\frac{|z-z'|}{p-1}.
$$
\end{proof}

\subsection{The case $p>2$}

In an early version of this paper (arXiv: 2302.06877v1), we have shown
\begin{equation}\label{eq:Holder_off_diagonal_p_ge_2_weaker}
|K_p(\zeta,z)-K_p(\zeta,z')|\leq C_{p,S}|z-z'|^{p/(2p-2)},\ \ \ \forall\,\zeta\in{S}.
\end{equation}
The exponent $p/(2p-2)$ can be improved to $1$ with the help of the following result due to Li \cite{LiYinji}.

\begin{proposition}\label{prop:H_p_upper_p_ge_2}
Let $p>2$ and $s>0$. Let $S\subset\Omega$ be a compact set in $\Omega$ and $U$ an open set with $S\subset{U}\subset\subset\Omega$. Then we have
\[
|\widehat{H}_p(z,z')|\leq{C}|z-z'|\cdot\left(\int_U|K_p(\cdot,z)-K_p(\cdot,z')|^s\right)^{1/s},\ \ \ \,\forall\,z,z'\in{S},
\]
where the constant $C$ depends on $p,s,S$ and $U$.
\end{proposition}

\begin{proof}
For the sake of completeness, we also include the proof here. Since \eqref{eq:H_p_decomposition} also holds when $p>2$, we may write
\begin{eqnarray*}
\widehat{H}_p(z,z')
&=& \left(K_p(z)^{p-2}-K_p(z')^{p-2}\right)\mathrm{Re}\left\{L_z\left(K_p(\cdot,z)-K_p(\cdot,z')\right)\right\}\\
& & +K_p(z')^{p-2}\mathrm{Re}\left\{(L_z-L_{z'})\left(K_p(\cdot,z)-K_p(\cdot,z')\right)\right\}.
\end{eqnarray*}
Take $U':=\{z\in{U}:d(z,S)<d(z,\partial{U})/2\}$, so that $S\subset{U'}\subset\subset{U}$. It follows from Cauchy's estimates that
\[
\left|L_z\left(K_p(\cdot,z)-K_p(\cdot,z')\right)\right|\leq C_{s,S,U}\left(\int_U|K_p(\cdot,z)-K_p(\cdot,z')|^s\right)^{1/s}
\]
and
\begin{eqnarray*}
\left|(L_z-L_{z'})\left(K_p(\cdot,z)-K_p(\cdot,z')\right)\right|
&\leq& C_{s,S,U}|z-z'|\sup_{U'}|\partial(K_p(\cdot,z)-K_p(\cdot,z'))|\\
&\leq& C_{s,S,U}|z-z'|\left(\int_U|K_p(\cdot,z)-K_p(\cdot,z')|^s\right)^{1/s}.
\end{eqnarray*}
Since $K_p(z)$ is locally Lipschitz continuous, the assertion follows.
\end{proof}

\begin{proof}[Proof of Theorem \ref{th:Holder_off_diagonal} for $p>2$]
Since $\{K_p(\cdot,z):z\in{S}\}$ forms a compact family of holomorphic functions in the topology of locally uniformly convergence (cf.  \cite{CZ},  Proposition 2.11),  it follows from a celebrated theorem due to Demailly-Koll\'{a}r \cite{DK} that 
\[
\int_U|K_p(\cdot,z)|^{-c}\leq{M},\ \ \ \forall\,z\in{S}
\]
for suitable constants $c=c_S>0$ and $M=M_S>0$, where
\[
U:=\{z\in \Omega:d(z,S)<d(S,\partial \Omega)/2\}
\]
satisfies $S\subset{U}\subset\subset\Omega$. Take $s=2c/(p-2+c)<2$, so that $(p-2)s/(2-s)=c$. By H\"{o}lder's inequality, we have
\begin{eqnarray*}
&& \int_U|K_p(\cdot,z)-K_p(\cdot,z')|^s\\
 &\leq& \int_U|K_p(\cdot,z)|^{(p-2)s/2}|K_p(\cdot,z)-K_p(\cdot,z')|^s|K_p(\cdot,z)|^{-(p-2)s/2}\\
&\leq& \left(\int_\Omega|K_p(\cdot,z)|^{p-2}|K_p(\cdot,z)-K_p(\cdot,z')|^2\right)^{s/2}\times\left(\int_U|K_p(\cdot,z)|^{-c}\right)^{1-s/2},
\end{eqnarray*}
so that
\[
\left(\int_U|K_p(\cdot,z)-K_p(\cdot,z')|^s\right)^{2/s} \leq M^{1-s/2} \int_\Omega|K_p(\cdot,z)|^{p-2}|K_p(\cdot,z)-K_p(\cdot,z')|^2.
\]
Interchange the role of $z$ and $z'$,  we obtain 
\[
\left(\int_U|K_p(\cdot,z)-K_p(\cdot,z')|^s\right)^{2/s} \leq M^{1-s/2} \int_\Omega|K_p(\cdot,z')|^{p-2}|K_p(\cdot,z)-K_p(\cdot,z')|^2.
\]
Therefore, 
\begin{eqnarray*}
& & \int_\Omega\left(|K_p(\cdot,z)|^{p-2}+|K_p(\cdot,z')|^{p-2}\right)|K_p(\cdot,z)-K_p(\cdot,z')|^2\\
&\geq& C_{p,S}\left(\int_U|K_p(\cdot,z)-K_p(\cdot,z')|^s\right)^{2/s}
\end{eqnarray*}
for some generic constant $C_{p,S}$. On the other hand, \eqref{eq:H_p_Holder_2} together with Proposition \ref{prop:H_p_upper_p_ge_2} yield that
\begin{eqnarray*}
& & \int_\Omega\left(|K_p(\cdot,z)|^{p-2}+|K_p(\cdot,z')|^{p-2}\right)|K_p(\cdot,z)-K_p(\cdot,z')|^2\\
&\leq& C_{p,S}|z-z'|\left(\int_U|K_p(\cdot,z)-K_p(\cdot,z')|^s\right)^{1/s}.
\end{eqnarray*}
Thus
\[
\left(\int_U|K_p(\cdot,z)-K_p(\cdot,z')|^s\right)^{1/s}\leq C_{p,S}|z-z'|.
\]
The assertion follows immediately from the mean-value inequality.
\end{proof}

\subsection{The case $p=1$}

We only get a weaker conclusion than Theorem \ref{th:Holder_off_diagonal}. 

\begin{theorem}\label{th:Holder_off_diagonal_K_1}
Given $z\in \Omega$,  define  $S_z:=\{K_1(\cdot,z)=0\}$.  For every neighbourhood $U$ of $z$ with $U\subset\subset\Omega\setminus{S_z}$, we have
\[
|K_1(\zeta,z)-K_1(\zeta,z')| \le C |z-z'|,\ \ \ \forall\,\zeta,z'\in{U},
\]
where the constant $C$ depends only on $z$ and $U$. The same conclusion also holds for $m_1(\cdot,\cdot)$.
\end{theorem}

We need the following result, whose proof is completely analogous to that of Proposition \ref{prop:H_p_upper_p_ge_2}.

\begin{proposition}\label{prop:H_p_upper_p=1}
For every open sets $U,U_1$ with $U\subset\subset{U_1}\subset\subset\Omega$, we have
\[
|\widehat{H}_1(z,z')|\leq{C}|z-z'|\cdot\sup_{U_1}|K_1(\cdot,z)-K_1(\cdot,z')|,\ \ \ \,\forall{z,z'\in{U}},
\]
where the constant $C$ depends on $U$ and $U_1$.
\end{proposition}

\begin{proof}[Proof of Theorem \ref{th:Holder_off_diagonal_K_1}]
Take domains $U_1,U_2$ and $U_3$ with $U\subset\subset{U_1}\subset\subset{U_2}\subset\subset{U_3}\subset\subset\Omega\setminus{S_z}$ (note that $\Omega\setminus{S_z}$ is a domain).   By \eqref{eq:H_p_Holder_0} and Proposition \ref{prop:H_p_upper_p=1}, we have
\begin{equation}\label{eq:Holder_K1}
\int_\Omega\frac{|\mathrm{Im}(\overline{K_1(\cdot,z)}K_1(\cdot,z'))|^2}{\left(|K_1(\cdot,z)|+|K_1(\cdot,z')|\right)^3} \leq \widehat{H}_1(z,z') \lesssim |z-z'|\cdot\sup_{U_1}|K_1(\cdot,z)-K_1(\cdot,z')|.
\end{equation}
Here and in what follows,  all implicit constants depend only on $U$ and $U_j$ ($j=1,2,3$) unless they are specially mentioned. There exists a constant $C\gg1$ which depends only on $U,U_3$  such that
\[
C^{-1}\leq|K_1(\zeta,z)|\leq\max\{|K_1(\zeta,z)|,|K_1(\zeta,z')|\}\leq C,\ \ \ \forall\,\zeta\in{U_3}.
\]
Thus
\begin{eqnarray*}
\frac{|\mathrm{Im}(\overline{K_1(\zeta,z)}K_1(\zeta,z'))|^2}{\left(|K_1(\zeta,z)|+|K_1(\zeta,z')|\right)^3} &=& \frac{|K_1(\zeta,z)|^4}{\left(|K_1(\zeta,z)|+|K_1(\zeta,z')|\right)^3}\left|\mathrm{Im}\frac{K_1(\zeta,z')}{K_1(\zeta,z)}\right|^2\\
&\asymp& \left|\mathrm{Im}\frac{K_1(\zeta,z')}{K_1(\zeta,z)}\right|^2,\ \ \ \forall\,\zeta\in{U_3}.
\end{eqnarray*}
By the mean-value inequality,  we have 
\begin{eqnarray}\label{eq:Im_h}
\sup_{U_2}\left|\mathrm{Im}\left\{\frac{K_1(\cdot,z')}{K_1(\cdot,z)}\right\}\right|^2 &\lesssim& \int_{U_3}\left|\mathrm{Im}\left\{\frac{K_1(\cdot,z')}{K_1(\cdot,z)}\right\}\right|^2\nonumber\\
&\lesssim& \int_\Omega\frac{|\mathrm{Im}(\overline{K_1(\cdot,z)}K_1(\cdot,z'))|^2}{\left(|K_1(\cdot,z)|+|K_1(\cdot,z')|\right)^3}\nonumber\\
&\lesssim& |z-z'|\cdot\sup_{U_1}|K_1(\cdot,z)-K_1(\cdot,z')|.
\end{eqnarray}
To proceed the proof,  we need the following celebrated Borel-Carath\'{e}odory inequality (see e.g., \cite{LaxZalcman}, Appendix B): 
\begin{equation}\label{eq:BC}
\sup_{\mathbb{D}_r}|h|\leq\frac{2r}{R-r}\sup_{\mathbb{D}_R}\mathrm{Im}\,h+\frac{R+r}{R-r}|h(0)|,\ \ \ \forall\,h\in\mathcal{O}(\mathbb{D}_R),
\end{equation}
where $0<r<R$ and $\mathbb{D}_r$ denotes the disc with center $0$ and radius $r$.  Apply \eqref{eq:BC} on every complex line through $\zeta\in\Omega$, we obtain
\begin{equation}\label{eq:BC_1}
\sup_{B(\zeta,r/2)}|h|\leq2\sup_{B(\zeta,r)}\mathrm{Im}\,h+3|h(\zeta)|
\end{equation}
for every $h\in\mathcal{O}(\Omega)$ and $0<r<\delta(\zeta)$.  In particular,  if we take 
\[
h:=\frac{K_1(\cdot,z')}{K_1(\cdot,z)}-1,
\]
then
\begin{equation}\label{eq:h_1}
\sup_{U_2}|\mathrm{Im}\,h|\lesssim|z-z'|^{1/2}\cdot\sup_{U_1}|K_1(\cdot,z)-K_1(\cdot,z')|^{1/2}
\end{equation}
in view of \eqref{eq:Im_h}. Moreover, since $K_1(\cdot,z)$ is holomorphic and $K_1(\cdot)$ is locally Lipschitz continuous, we have
\begin{equation}\label{eq:h_2}
|h(z)|\leq\frac{|K_1(z,z')-K_1(z',z')|+|K_1(z')-K_1(z)|}{K_1(z)}\lesssim|z-z'|,\ \ \ z,z'\in{U}.
\end{equation}
Take a chain of balls connecting $z$ and $\zeta\in{U_1}$, we have 
\begin{eqnarray*}
& & |K_1(\zeta,z)-K_1(\zeta,z')| = |K_1(\zeta,z)|\cdot|h(\zeta)|\\
&\lesssim& |z-z'|^{1/2}\cdot\sup_{U_1}|K_1(\cdot,z)-K_1(\cdot,z')|^{1/2}+|z-z'|
\end{eqnarray*}
in view of $\eqref{eq:BC_1}\sim  \eqref{eq:h_2}$, so that $A:=(\sup_{U_1}|K_1(\cdot,z)-K_1(\cdot,z')|)^{1/2}$ satisfies the inequality
\[
A^2\lesssim|z-z'|^{1/2}A+|z-z'|,
\]
from which the assertion immediately follows.
\end{proof}

\section{Global irregularity of $K_p(\zeta,z)$}\label{sec:boundary}
In this section,  we shall prove Theorem \ref{th:boundary-regularity}.  The argument relies heavily on the following 

   \begin{proposition}[cf.  \cite{CZ},  Proposition 3.3]\label{prop:BR}
   Suppose that $\Omega$ is a bounded simply-connected domain in $\mathbb C^n$ and $m_p(\cdot,z)$ is zero-free for some $p\ge 1$ and $z\in \Omega$. Then  
   \begin{enumerate}
   \item[$(1)$] \ \ \ $K_s(z) = K_p(z)$\/ for any $s\ge p$.
   \item[$(2)$] \ \ \ $m_s(\cdot,z) = m_p(\cdot,z)^{p/s}$\/ for any $s\ge p$.
   \end{enumerate}
     \end{proposition}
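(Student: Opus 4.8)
The plan is to produce the $A^s$-minimizer explicitly from the $A^p$-minimizer by taking a fractional power. Write $m:=m_p(\cdot,z)$ for brevity, so that $m(z)=1$, $\|m\|_p^p=K_p(z)^{-1}$, and $m$ is zero-free on $\Omega$ by hypothesis. Since $\Omega$ is simply-connected, $m$ admits a single-valued holomorphic logarithm, and I set $g:=\exp\!\big(\tfrac{p}{s}\log m\big)=m^{p/s}$, choosing the branch with $g(z)=1$. A direct computation gives $\|g\|_s^s=\int_\Omega|m|^p=K_p(z)^{-1}$, so $g\in A^s(\Omega)$ is admissible for the problem \eqref{eq:MinProb} defining $m_s(\cdot,z)$; in particular $m_s(z)\le\|g\|_s=K_p(z)^{-1/s}$, which already yields the inequality $K_s(z)\ge K_p(z)$. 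The whole difficulty is to prove the reverse, i.e.\ that $g$ is \emph{exactly} the minimizer.

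To this end I would verify that $g$ satisfies the first-order (orthogonality) condition characterizing the $A^s$-minimizer: $\int_\Omega|g|^{s-2}\overline{g}\,h=0$ for every $h\in A^s(\Omega)$ with $h(z)=0$. The key is the algebraic identity $|g|^{s-2}\overline{g}=|m|^{p-2}\overline{m}\cdot m^{\,1-p/s}$, valid because $|g|=|m|^{p/s}$ and $\overline{g}=|m|^{2p/s}m^{-p/s}$. Setting $\tilde h:=m^{\,1-p/s}h$, which is holomorphic and vanishes at $z$, the condition becomes $\int_\Omega|m|^{p-2}\overline{m}\,\tilde h=0$; and this is precisely the orthogonality relation contained in the $A^p$ reproducing formula \eqref{eq:RPF} applied to $\tilde h$ (recall $K_p(\cdot,z)=m\,K_p(z)$, so \eqref{eq:RPF} reads $f(z)=K_p(z)\int_\Omega|m|^{p-2}\overline{m}\,f$, whence $\int_\Omega|m|^{p-2}\overline{m}\,\tilde h=\tilde h(z)/K_p(z)=0$).

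The point where the hypothesis $s\ge p$ is genuinely used is the integrability $\tilde h\in A^p(\Omega)$: by H\"older with conjugate exponents $s/p$ and $s/(s-p)$ one has $\int_\Omega|\tilde h|^p=\int_\Omega|m|^{(1-p/s)p}|h|^p\le\big(\int_\Omega|m|^p\big)^{(s-p)/s}\big(\int_\Omega|h|^s\big)^{p/s}<\infty$, the exponent bookkeeping $\big((1-p/s)p\big)\cdot\frac{s}{s-p}=p$ being exactly what makes $|m|^{(1-p/s)p}\in L^{s/(s-p)}$. With the orthogonality in hand, optimality of $g$ follows from convexity: for any admissible $f$ I write $f=g+h$ with $h(z)=0$ and apply the pointwise inequalities \eqref{eq:ineq_leq2}/\eqref{eq:ineq_geq2} with $a=g$, $b=f$, then integrate; the linear term $s\,\mathrm{Re}\int_\Omega|g|^{s-2}\overline{g}\,h$ vanishes by the above, leaving $\|f\|_s^s\ge\|g\|_s^s$. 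Hence $g$ is a minimizer, and by uniqueness of the minimizer $g=m_s(\cdot,z)$, which is assertion $(2)$; evaluating norms gives $m_s(z)=\|g\|_s=K_p(z)^{-1/s}$, i.e.\ $K_s(z)=K_p(z)$, which is $(1)$. The main obstacle is thus not any single hard estimate but the correct organization of these pieces: the single-valued fractional power (needing simply-connectedness and zero-freeness), the transfer identity $|g|^{s-2}\overline{g}=|m|^{p-2}\overline{m}\,m^{1-p/s}$, and the integrability that forces $s\ge p$.
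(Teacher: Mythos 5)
Your proof is correct, and it is the natural argument: the paper itself gives no proof of Proposition \ref{prop:BR}, quoting it from \cite{CZ} (Proposition 3.3), where the result is established in essentially the same way --- take the single-valued branch $g=m_p(\cdot,z)^{p/s}$ (using zero-freeness and simple connectedness), verify via the identity $|g|^{s-2}\overline{g}=|m_p(\cdot,z)|^{p-2}\overline{m_p(\cdot,z)}\,m_p(\cdot,z)^{1-p/s}$ and the reproducing formula \eqref{eq:RPF} that $g$ satisfies the variational (orthogonality) condition, and conclude minimality from the convexity inequalities \eqref{eq:ineq_leq2}--\eqref{eq:ineq_geq2} together with uniqueness of the minimizer. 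Your H\"older bookkeeping showing $m_p(\cdot,z)^{1-p/s}h\in A^p(\Omega)$, which is exactly where $s\ge p$ enters, is the right justification for applying \eqref{eq:RPF}, so I see no gap.
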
  
     
\begin{proof}[Proof of Theorem \ref{th:boundary-regularity}]
     Following \cite{CZ},  we define
     $$
     \mathcal F(\Omega):=\left\{z\in \Omega: K_2(\cdot,z) \ \text{is\ zero-free} \right\},\  \mathcal N(\Omega):=\Omega\backslash \mathcal F(\Omega).
     $$ 
  It is a simple consequence of  Hurwitz's theorem  that $\mathcal F(\Omega)$ is a closed subset in $\Omega$,  i.e.,  $\mathcal N(\Omega)$ is open in $\Omega$ (cf.  \cite{CZ},  Proposition 3.5).  
  
  Suppose  $\Omega\subset \mathbb C^n$ is a bounded smooth complete Reinhardt domain on which $K_2(\zeta,z)$ is not zero-free,  i.e.,  $\mathcal N(\Omega)\neq \emptyset$.  We also  have  $\mathcal F(\Omega)^\circ \neq \emptyset$ in view of \cite{CZ},  Proposition 3.7.  Thus given 
a point  
$
  z_0\in \partial \mathcal F(\Omega)^\circ \cap \Omega =\partial\overline{\mathcal{N}(\Omega)}\cap\Omega,
  $
  there exists $\varepsilon>0$ such that
  $$
B(z_0,\varepsilon)\cap\mathcal{F}(\Omega)^\circ\neq\emptyset,\ \ \ B(z_0,\varepsilon)\cap\mathcal{N}(\Omega)\neq\emptyset.
$$
Take a sequence of points $\{z_j\}\subset \mathcal N(\Omega)$ such that $z_j\rightarrow z_0$ as $j\rightarrow \infty$.  For each $j$,  there exists a point $\zeta_j\in \Omega$ with 
$$
K_2(\zeta_j,z_j)=0=m_2(\zeta_j,z_j).
$$
Let $\zeta_0$ be a limit point of the sequence $\{\zeta_j\}$ inside $\overline{\Omega}$. 
The power series expansion of $K_2$ implies that 
$$
K_2(\lambda \zeta,z) = K_2(\zeta,\bar{\lambda}z)
$$
for any $\lambda=(\lambda_1,\cdots,\lambda_n)$ for which both sides are defined (cf.  \cite{BB}).   This shows that 
$$
K_2(\zeta,z)=K_2(\lambda^{-1}\zeta,\bar{\lambda}z),  \ \ \ \lambda^{-1}:=(\lambda_1^{-1},\cdots,\lambda_n^{-1}),
$$
 can be extended to a function which is holomorphic in $(\zeta,\bar{z})$ for $\zeta\in U$ and $z\in V$,  where $U$ is a neighborhood of $\overline{\Omega}$ and $V$ is a neighborhood of $z_0$.   
Thus
$$
K_2(\zeta_0,z_0)=0=m_2(\zeta_0,z_0).
$$
On the other hand,  since $\mathcal F(\Omega)$ is closed and $z_0\in\partial\mathcal{F}(\Omega)^\circ\subset\mathcal{F}(\Omega)$,  so $K_2(\cdot,z_0)$ and $m_2(\cdot,z_0)$ are zero-free on $\Omega$.  Thus $\zeta_0\in \partial \Omega$ and Proposition \ref{prop:BR} gives
\begin{equation}\label{eq:irreg}
m_2(\zeta,z_0)=m_{2k}(\zeta,z_0)^k
\end{equation}
for all $\zeta\in \Omega$.   

Let $\mathcal L_0$ be the normal complex  line of $\Omega$ at $\zeta_0$.  Since $m_2(\cdot,z_0)$ is holomorphic on $U\supset \overline{\Omega}$ whose zero set lives outside $\Omega$,  it follows that
$$
k_0:=\mathrm{ord}_{\zeta_0} \left(m_2(\cdot,z_0)|_{\mathcal L_0\cap U}\right)<\infty.
$$ 
We claim that $K_{2k}(\cdot,z_0)\notin C^\alpha(\overline{\Omega})$ for any $\alpha>0$ and $k>k_0/\alpha$.   To see this,  suppose on the contrary that $K_{2k}(\cdot,z_0)\in C^\alpha(\overline{\Omega})$ for some $\alpha>0$ and $k>k_0/\alpha$.  Then \eqref{eq:irreg} remains valid on $\overline{\Omega}$.   But this implies that for $\zeta\in \mathcal L_0\cap \Omega$ sufficiently close to $\zeta_0$,  
$$
|\zeta-\zeta_0|^{k_0}\asymp  |m_2(\zeta,z_0)|  =  |m_{2k}(\zeta,z_0)|^k 
\lesssim |\zeta-\zeta_0|^{\alpha k},
$$
which is absurd.  Thus we have verified the first assertion.

For the second assertion,  we first infer from Proposition \ref{prop:BR} that
$$
m_2(\zeta,z)=m_{2k}(\zeta,z)^k,\ \ \ \zeta\in \Omega,\,z\in \mathcal F(\Omega)^\circ.
$$
Suppose $K_{2k}(\cdot,z)$ is continuous on $\overline{\Omega}$ for any $z\in \Omega$,  so is $m_{2k}(\cdot,z)$. 
Then we have
$$
m_2(\zeta_0,z)=\lim_{j\rightarrow \infty} m_2(\zeta_j,z) = \lim_{j\rightarrow \infty} m_{2k}(\zeta_j,z)^k=m_{2k}(\zeta_0,z)^k,\ \ \ z\in \mathcal F(\Omega)^\circ,
$$
while $m_2(\zeta_0,z_0)=0$.  
Without loss of generality,  we may assume that $z_0\in \partial B\cap \partial \mathcal F(\Omega)^\circ$ for suitable ball $B\subset \mathcal F(\Omega)^\circ$.  Since $\overline{K_2(\zeta_0,\cdot)}=K_2(\cdot,\zeta_0)$ is holomorphic in some ball $B_0\ni z_0$ which is not identically zero,   so there is a complex line $\mathcal L'_0$ which intersects transversally with $\partial B$ at $z_0$,  such that 
$$
k_0':=\mathrm{ord}_{z_0}  m_2(\zeta_0,\cdot)|_{\mathcal L'_0\cap B_0}<\infty.
$$  
Suppose on the contrary that $K_{2k}(\zeta_0,\cdot)\in C^\alpha_{\rm loc}(\Omega)$ for some $\alpha>0$ and $k>k'_0/\alpha$.  Then  for $z\in \mathcal L_0'\cap B$ sufficiently close to $z_0$,  
$$
|z-z_0|^{k_0'}\asymp  |m_2(\zeta_0,z)|  =  |m_{2k}(\zeta_0,z)|^k 
\lesssim |z-z_0|^{\alpha k},
$$
which is impossible.  Thus the second assertion follows.  
\end{proof}

\section{Interior $C^{1,1}-$regularity of $K_p(z)$}\label{sec:interior_on}

We first verify the following lemma, which also holds for $p=1$.

\begin{lemma}\label{lm:C1/2_K_1}
For $1\le p<\infty$ and $j=1,\cdots,2n$,
\[
z\mapsto \left.\frac{\partial{K_p(\cdot,z)}}{\partial{x_j}}\right|_z\in{\mathrm{Lip}_{\rm loc}(\Omega)}.
\]
The same conclusions also hold for $m_p(\cdot,z)$.
\end{lemma}

\begin{proof}
Let $U\subset\subset\Omega$ be a neighbourhood of $z$ which omit $S_z$. By Theorem \ref{th:Holder_off_diagonal} and \ref{th:Holder_off_diagonal_K_1}, for any $z'\in U\setminus \{z\}$,  the function
\[
h:=\frac{K_p(\cdot,z)-K_p(\cdot,z')}{z-z'}
\]
is  holomorphic  on $\Omega$ and satisfies $\sup_U|{h}|\leq{C}<\infty$ for any $1\le p<\infty$.   By Cauchy's estimates,  we conclude that
\begin{equation}\label{eq:derivative_h}
\left|\frac{\partial{h}}{\partial{x_j}}(z')\right|\leq{MC},\ \ \ z,z'\in{U'}\subset\subset{U},
\end{equation}
where the constant $M$ depends only on $U'$, $U$ and $\Omega$.  

On the other hand,  since $\partial{K_p(\cdot,z)}/\partial{x_j}$ is  holomorphic  on $\Omega$,  it follows that
\[
\Bigg|\left.\frac{\partial{K_p(\cdot,z)}}{\partial{x_j}}\right|_z-\left.\frac{\partial{K_p(\cdot,z)}}{\partial{x_j}}\right|_{z'}\Bigg| \leq {C'|z-z'|},
\]
where $C'=C'(U',\Omega)>0$.  Hence
\begin{eqnarray*}
\Bigg|\frac{\partial{K_p(\cdot,z)}}{\partial{x_j}}\bigg|_z-\frac{\partial{K_p(\cdot,z')}}{\partial{x_j}}\bigg|_{z'}\Bigg| &\leq& \Bigg|\frac{\partial{K_p(\cdot,z)}}{\partial{x_j}}\bigg|_z-\frac{\partial{K_p(\cdot,z)}}{\partial{x_j}}\bigg|_{z'}\Bigg|+\left|\frac{\partial{h}}{\partial{x_j}}(z')\right|\cdot|z-z'|\\
&\leq&{C'|z-z'|+MC|z-z'|}
\end{eqnarray*}
whenever $z'$ is sufficiently close to  $z$.
\end{proof}

\begin{proof}[Proof of Theorem \ref{th:Main}]
Suppose that $1\leq p<\infty$. Let $e_1,\cdots,e_{2n}$ be the standard basis in $\mathbb{R}^{2n}=\mathbb{C}^n$ and $t\in\mathbb{R}$. Use \eqref{eq:ineq_geq1_1}, we obtain
\begin{eqnarray*}
& & K_p(z+te_j)^{-1}-K_p(z)^{-1}\\
&=& m_p(z+te_j)^p-m_p(z)^p\\
&=& \|m_p(\cdot,z+te_j)\|^p_p-\|m_p(\cdot,z)\|^p_p\\
&\leq& p\,\mathrm{Re}\int_\Omega|m_p(\cdot,z+te_j)|^{p-2}\overline{m_p(\cdot,z+te_j)}(m_p(\cdot,z+te_j)-m_p(\cdot,z))
\end{eqnarray*}
This together with \eqref{eq:RPF} and the fact that $m_p(z+te_j,z+te_j)=m_p(z,z)=1$ give
\begin{eqnarray}\label{eq:difference I}
K_p(z+te_j)^{-1}-K_p(z)^{-1} &\leq& \frac{p}{K_p(z+te_j)}\mathrm{Re}\left\{m_p(z+te_j,z+te_j)-m_p(z+te_j,z)\right\}\nonumber\\
&=& -\frac{p}{K_p(z+te_j)}\mathrm{Re}\left\{m_p(z+te_j,z)-m_p(z,z)\right\}\nonumber\\
&=& -\frac{pt}{K_p(z+te_j)}\,\mathrm{Re}\left.\frac{\partial{m_p(\cdot,z)}}{\partial{x_j}}\right|_z+O(|t|^2)\nonumber\\
&=& -\frac{pt}{K_p(z)}\,\mathrm{Re}\left.\frac{\partial{m_p(\cdot,z)}}{\partial{x_j}}\right|_z+O(|t|^2),
\end{eqnarray}
in view of the local Lipschitz continuity of $K_p(\cdot)$.

Analogously,  
\begin{eqnarray}\label{eq:difference II}
K_p(z+te_j)^{-1}-K_p(z)^{-1}
&\geq& p\,\mathrm{Re}\int_\Omega|m_p(\cdot,z)|^{p-2}\overline{m_p(\cdot,z)}(m_p(\cdot,z+te_j)-m_p(\cdot,z))\nonumber\\
&=& -\frac{pt}{K_p(z)}\,\mathrm{Re}\left.\frac{\partial{m_p(\cdot,z+te_j)}}{\partial{x_j}}\right|_z+O(|t|^2).
\end{eqnarray}
Since
\[
\left.\frac{\partial{m_p(\cdot,z+te_j)}}{\partial{x_j}}\right|_{z}=\left.\frac{\partial{m_p(\cdot,z)}}{\partial{x_j}}\right|_{z}+O(|t|)
\]
in view of Lemma \ref{lm:C1/2_K_1}, it follows from \eqref{eq:difference II} that
\[
K_p(z+te_j)^{-1}-K_p(z)^{-1} \geq -\frac{pt}{K_p(z)}\,\mathrm{Re}\left.\frac{\partial{m_p(\cdot,z)}}{\partial{x_j}}\right|_{z}+O(|t|^2).
\]
This together with \eqref{eq:difference I} give
\[
\frac{\partial{K_p^{-1}}}{\partial{x_j}}(z)=-\frac{p}{K_p(z)}\,\mathrm{Re}\left.\frac{\partial{m_p(\cdot,z)}}{\partial{x_j}}\right|_{z},
\]
which implies
\[
\frac{\partial{K_p}}{\partial{x_j}}(z)=-K_p(z)^2\frac{\partial{K_p^{-1}}}{\partial{x_j}}(z)=p\,\mathrm{Re}\frac{\partial{K_p(\cdot,z)}}{\partial{x}_j}\bigg|_z,
\]
i.e., the first formula in \eqref{eq:partial derivative} holds. This combined with Lemma \ref{lm:C1/2_K_1} yields  $K_p(\cdot)\in{C^{1,1}_{\rm loc}(\Omega)}$ for any $1\leq p<\infty$.

It remains to verify the second formula in \eqref{eq:partial derivative}. By definition, we have
\begin{eqnarray*}
\frac{\partial{K_p}}{\partial{z_j}}(z) &=& \frac{1}{2}\left(\frac{\partial{K_p}}{\partial{x_j}}(z)-i\,\frac{\partial{K_p}}{\partial{x_{n+j}}}(z)\right)\\
 &=& \frac{p}{2}\left(\mathrm{Re}\frac{\partial{K_p}(\cdot,z)}{\partial{x_j}}\bigg|_z-i\,\mathrm{Re}\frac{\partial{K_p}(\cdot,z)}{\partial{x_{n+j}}}\bigg|_z\right).
\end{eqnarray*}
Since $K_p(\cdot,z)$ is holomorphic for fixed $z$,  we have 
\[
\frac{\partial{K_p}(\cdot,z)}{\partial{x_j}}=\frac{\partial{K_p}(\cdot,z)}{\partial{z_j}},\ \ \ \frac{\partial{K_p}(\cdot,z)}{\partial{x_{n+j}}}=i\,\frac{\partial{K_p}(\cdot,z)}{\partial{z_j}},
\]
so that
\begin{eqnarray*}
\frac{\partial{K_p}}{\partial{z_j}}(z) &=& \frac{p}{2}\left(\mathrm{Re}\frac{\partial{K_p}(\cdot,z)}{\partial{z_j}}\bigg|_z-i\,\mathrm{Re}\left\{i\,\frac{\partial{K_p}(\cdot,z)}{\partial{z_j}}\bigg|_z\right\}\right)\\
&=& \frac{p}{2}\left(\mathrm{Re}\frac{\partial{K_p}(\cdot,z)}{\partial{z_j}}\bigg|_z+i\,\mathrm{Im}\frac{\partial{K_p}(\cdot,z)}{\partial{z_j}}\bigg|_z\right)\\
&=& \frac{p}{2}\cdot \frac{\partial{K_p}(\cdot,z)}{\partial{z_j}}\bigg|_z.
\end{eqnarray*}
\end{proof}

As is indicated in \eqref{eq:partial derivative} and the proof of Lemma \ref{lm:C1/2_K_1}, one can actually show that $K_p(\cdot)$ is locally $C^{k+1,\alpha}$ provided that $K_p(\zeta,\cdot)$ is locally $C^{k,\alpha}$. It seems to be a big challenge to get the differentiablity of $K_p(\zeta,\cdot)$. A simple partial result is Corollary \ref{cor:differentiable_z_z}.

\begin{proof}[Proof of Corollary \ref{cor:differentiable_z_z}]
Let $\xi=(\xi_1,\cdots,\xi_n)\in\mathbb{C}^n$. By Theorem \ref{th:Main}, we have
\begin{eqnarray*}
K_p(z,z+\xi)-K_p(z,z)
&=& K_p(z,z+\xi)-K_p(z+\xi,z+\xi)+K_p(z+\xi)-K_p(z)\\
&=& -\sum^n_{j=1}\left.\frac{\partial{K_p(\cdot,z+\xi)}}{\partial{z}_j}\right|_z\xi_j+2\mathrm{Re}\sum^n_{j=1}\frac{\partial{K_p(z)}}{\partial{z}_j}\xi_j+o(|\xi|).
\end{eqnarray*}
Here, the coefficient in the error term $o(|\xi|)$ depends on $D^2K_p(\cdot,z+\xi)|_z$, which is uniformly bounded as $\xi\rightarrow0$. Since
\[
\frac{K_p(\cdot,z+\xi)-K_p(\cdot,z)}{\xi}
\]
is locally bounded, we may use Cauchy's estimates in a similar way as the proof of Lemma \ref{lm:C1/2_K_1} to derive
\[
\Bigg|\left.\frac{\partial{K_p(\cdot,z+\xi)}}{\partial{z}_j}\right|_z-\left.\frac{\partial{K_p(\cdot,z)}}{\partial{z}_j}\right|_z\Bigg|=O(|\xi|).
\]
Thus
\[
K_p(z,z+\xi)-K_p(z,z)=-\sum^n_{j=1}\left.\frac{\partial{K_p(\cdot,z)}}{\partial{z}_j}\right|_z\xi_j+2\mathrm{Re}\sum^n_{j=1}\frac{\partial{K_p(z)}}{\partial{z}_j}\xi_j+o(|\xi|),
\]
i.e., $K_p(z,\cdot)$ is differentiable at $z$.
\end{proof}

The argument above and \eqref{eq:partial derivative} also yield
\begin{equation}\label{eq:partial_derivative_second}
\mathrm{Re}\left.\frac{\partial{K_p(z,\cdot)}}{\partial{x_j}}\right|_z=(p-1)\mathrm{Re}\left.\frac{\partial{K_p(\cdot,z)}}{\partial{x_j}}\right|_z,\ \ \ \mathrm{Im}\left.\frac{\partial{K_p(z,\cdot)}}{\partial{x_j}}\right|_z=-\mathrm{Im}\left.\frac{\partial{K_p(\cdot,z)}}{\partial{x_j}}\right|_z.
\end{equation}

\begin{proof}[Proof of Corollary \ref{cor:Image}]
When $p=1$, we have
\[
\mathrm{Re}\left.\frac{\partial{K_1(z,\cdot)}}{\partial{x_j}}\right|_z=0,
\]
in view of \eqref{eq:partial_derivative_second}. Thus the image of the function
\[
z\mapsto\left.\frac{\partial{K_1(z,\cdot)}}{\partial{x_j}}\right|_z
\]
lies on the imaginary axis. 
\end{proof}

The case when $p>1$ is different.  For example,  if $\Omega=\mathbb{D}$ is the unit disc in $\mathbb{C}$, then
\[
K_p(z,w)=\frac{1}{\pi}\frac{(1-|z|^2)^{4/p-2}}{(1-\zeta\overline{z})^{4/p}}
\]
(see \cite{CZ},   Proposition 2.9),  so that
\begin{eqnarray*}
\left.\frac{\partial{K_p(z,\cdot)}}{\partial{x}}\right|_z
&=& \left.\frac{\partial{K_p(z,w)}}{\partial{w}}\right|_{w=z}+\left.\frac{\partial{K_p(z,w)}}{\partial\overline{w}}\right|_{w=z}\\
&=& \frac{2}{\pi(1-|z|^2)^3}\left(z-\frac{2-p}{p}\overline{z}\right),\ \ \ z\in\mathbb{D}
\end{eqnarray*}
where $x=\mathrm{Re}\,w$. It is easy to verify that the image of such a function is the whole complex plane when $p>1$.

\section{Parameter dependence}\label{sec:parameter}
The materials of this section are taken from the unpublished preprint \cite{CX}. Consider the following weighted $L^2$ Bergman space:
$$
A^2(\Omega,\varphi):=\left\{ f\in \mathcal O(\Omega): \int_\Omega |f|^2 e^{-\varphi}<\infty\right\}.
$$
According to Corollary 3.1 in \cite{PW},  $A^2(\Omega,\varphi)$ admits a unique Bergman reproducing kernel $K_{\Omega,\varphi}(\cdot,\cdot)$ if for each $z\in \Omega$ there exists a number $\alpha>0$ such that $e^{\alpha\varphi}$ is integrable in a neighborhood of $z$. In particular, we have
\begin{equation}\label{eq:def_weighted_kernel}
K_{\Omega,\varphi}(z)=K_{\Omega,\varphi}(z,z)=\sup\left\{|f(z)|^2 :f\in A^2(\Omega,\varphi), \int_\Omega |f|^2 e^{-\varphi}=1\right\}.
\end{equation}
Given $p\ge 1$,  define
$$
A^2_{p,z}(\Omega):=A^2(\Omega,(2-p)\log |m_p(\cdot,z)|)\ \ \text{and}\ \ K_{2,p,z}(\cdot,\cdot):=K_{\Omega,(2-p)\log |m_p(\cdot,z)|}(\cdot,\cdot).  
$$ 
 
\begin{theorem}\label{th:p<2}
Let $1\le p\le 2$.  Then we have
\begin{equation}\label{eq:RF_0}
K_p(\cdot,z)=K_{2,p,z}(\cdot,z),\ \ \ \forall\,z\in \Omega.
\end{equation}
\end{theorem}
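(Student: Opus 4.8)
The plan is to recognize $K_p(\cdot,z)$ as the function in $A^2_{p,z}(\Omega)$ representing the point-evaluation functional $f\mapsto f(z)$, which by definition is $K_{2,p,z}(\cdot,z)$, and then invoke uniqueness of the reproducing kernel. First I would record that the weight is $e^{-\varphi}=|m_p(\cdot,z)|^{-(2-p)}=|m_p(\cdot,z)|^{p-2}$, so the inner product on $A^2_{p,z}(\Omega)$ is $\langle f,g\rangle_\varphi=\int_\Omega f\bar g\,|m_p(\cdot,z)|^{p-2}$. Because $\varphi=(2-p)\log|m_p(\cdot,z)|$ with $2-p\ge 0$, for every $\alpha>0$ the function $e^{\alpha\varphi}=|m_p(\cdot,z)|^{\alpha(2-p)}$ is locally bounded, hence locally integrable; thus the admissibility hypothesis of Corollary 3.1 in \cite{PW} is satisfied, $K_{2,p,z}$ exists, and point evaluation is continuous on $A^2_{p,z}(\Omega)$.

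The first step is to check membership $K_p(\cdot,z)\in A^2_{p,z}(\Omega)$. Using $K_p(\cdot,z)=m_p(\cdot,z)K_p(z)$ together with $\|m_p(\cdot,z)\|_p^p=m_p(z)^p=K_p(z)^{-1}$, a direct computation gives
\[
\int_\Omega|K_p(\cdot,z)|^2|m_p(\cdot,z)|^{p-2}=K_p(z)^2\int_\Omega|m_p(\cdot,z)|^p=K_p(z)^2\cdot K_p(z)^{-1}=K_p(z)<\infty.
\]

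The decisive step is the inclusion $A^2_{p,z}(\Omega)\subseteq A^p(\Omega)$, which will let me feed every element of the weighted space directly into the reproducing formula \eqref{eq:RPF}. Given $f\in A^2_{p,z}(\Omega)$, I would write $|f|^p=\left(|f|^2|m_p(\cdot,z)|^{p-2}\right)^{p/2}|m_p(\cdot,z)|^{p(2-p)/2}$ and apply H\"older's inequality with conjugate exponents $2/p$ and $2/(2-p)$. Since the exponent on $|m_p(\cdot,z)|$ in the second factor then becomes exactly $p$, this yields
\[
\int_\Omega|f|^p\le\left(\int_\Omega|f|^2|m_p(\cdot,z)|^{p-2}\right)^{p/2}\left(\int_\Omega|m_p(\cdot,z)|^p\right)^{(2-p)/2}<\infty,
\]
so $f\in A^p(\Omega)$. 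For $p=1$ this is the Cauchy--Schwarz instance, and $p=2$ is trivial since the weight is $1$.

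Combining the two steps concludes the argument: for any $f\in A^2_{p,z}(\Omega)$ we now have $f\in A^p(\Omega)$, so \eqref{eq:RPF} applies and gives $f(z)=\int_\Omega f\,\overline{K_p(\cdot,z)}\,|m_p(\cdot,z)|^{p-2}=\langle f,K_p(\cdot,z)\rangle_\varphi$. Hence $K_p(\cdot,z)\in A^2_{p,z}(\Omega)$ represents the continuous functional $f\mapsto f(z)$, and by uniqueness of the representing element this must be $K_{2,p,z}(\cdot,z)$, giving $K_p(\cdot,z)=K_{2,p,z}(\cdot,z)$. I expect the only genuine obstacle to be the inclusion $A^2_{p,z}(\Omega)\subseteq A^p(\Omega)$: it is short once the right H\"older exponents are spotted, but it is precisely where the restriction $1\le p\le 2$ (so that $2-p\ge0$) is essential, both for this inclusion and for the admissibility of the weight.
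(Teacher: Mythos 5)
Your proposal is correct and follows essentially the same route as the paper: the same H\"older computation gives $A^2_{p,z}(\Omega)\subseteq A^p(\Omega)$, the same norm identity shows $K_p(\cdot,z)\in A^2_{p,z}(\Omega)$, and the final appeal to uniqueness of the representing element is just a repackaging of the paper's step of substituting $f=K_p(\cdot,z)-K_{2,p,z}(\cdot,z)$ into the orthogonality relation. The only (welcome) addition is your explicit verification of the admissibility hypothesis from \cite{PW}, which the paper leaves implicit.
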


\begin{proof}
We first verify that $A^2_{p,z}(\Omega)\subset A^p(\Omega)$ for $1\le p\le 2$.  To see this, simply note that for any $f\in A^2_{p,z}(\Omega)$,
\begin{eqnarray*}
\int_\Omega |f|^p & = & \int_\Omega \left(|m_p(\cdot,z)|^{p(p-2)/2}|f|^p\right)|m_p(\cdot,z)|^{p(2-p)/2}\\
& \le & \left(\int_\Omega |m_p(\cdot,z)|^{p-2} |f|^2\right)^{p/2} \left(\int_\Omega |m_p(\cdot,z)|^p \right)^{1-p/2}\\
& = & m_p(z)^{p(1-p/2)} \left(\int_\Omega |m_p(\cdot,z)|^{p-2} |f|^2\right)^{p/2}\\
& < & \infty.
\end{eqnarray*}
Thus  we have
\begin{equation}\label{eq:RF_1}
f(z)= \int_\Omega |m_p(\cdot,z)|^{p-2}\overline{K_p(\cdot,z)}f,\ \ \ \forall\,f\in A^2_{p,z}(\Omega),
\end{equation}
in view of \eqref{eq:RPF}.
On the other hand,  the reproducing formula for $A^2_{p,z}(\Omega)$ gives
\begin{equation}\label{eq:RF_2}
f(z)= \int_\Omega |m_p(\cdot,z)|^{p-2}\overline{K_{2,p,z}(\cdot,z)}f,\ \ \ \forall\,f\in A^2_{p,z}(\Omega).
\end{equation}
Thus
\begin{equation}\label{eq:RF_3}
 \int_\Omega |m_p(\cdot,z)|^{p-2}\overline{(K_p(\cdot,z)-K_{2,p,z}(\cdot,z))}f=0,\ \ \ \forall\,f\in A^2_{p,z}(\Omega).
\end{equation}
Since 
\begin{equation}\label{eq:RF_4}
 \int_\Omega |m_p(\cdot,z)|^{p-2}|K_p(\cdot,z)|^2=K_p(z)^2 \int_\Omega |m_p(\cdot,z)|^p=
 K_p(z),  
\end{equation}
we see that $K_p(\cdot,z)\in A^2_{p,z}(\Omega)$.   Substitute $f:=K_p(\cdot,z)-K_{2,p,z}(\cdot,z)$ into \eqref{eq:RF_3},  we immediately get $f=0$,  i.e.,  \eqref{eq:RF_0} holds.
\end{proof}

Recall that the hyperconvexity index $\alpha(\Omega)$ is given by
\[
\alpha(\Omega):=\sup\left\{\alpha\geq0;\ \exists\,\rho\in{PSH^-(\Omega)}\cap{C(\Omega)},\ \text{s.t.}\ -\rho\lesssim \delta_\Omega^\alpha\right\}.
\]
As an application of Theorem \ref{th:p<2},  we have

\begin{proposition}\label{prop:Integ}
Let $1\leq{p}\leq2$. If\/ $\alpha(\Omega)>0$,  then
$K_p(\cdot,z)\in L^q(\Omega)$\/ for any $z\in \Omega$ and $q< 2pn/(2n-\alpha(\Omega))$.
\end{proposition}

Let $A^2(\Omega,\varphi)$ and  $K_{\Omega,\varphi}$ be given as \S\,1.  We need the following $L^2$ boundary decay estimate for $K_{\Omega,\varphi}(\cdot,z)$.

\begin{proposition}\label{prop:BergmanIntegral}
Let $\Omega\subset {\mathbb C}^n$ be a pseudoconvex domain and $\rho$ a negative continuous psh  function on $\Omega$.
 Set
  $$
 \Omega_t=\{\zeta\in \Omega:-\rho(\zeta)>t\},\ \ \ t>0.
 $$
   Let $a>0$ be given.
  For every $0<r<1$, there exist constants $\varepsilon_r,C_r>0$ such that
  \begin{equation}\label{eq:2.1}
  \int_{-\rho\le \varepsilon} |K_{\Omega,\varphi}(\cdot,z)|^2e^{-\varphi} \le C_{r}\, K_{\Omega_a,\varphi}(z) (\varepsilon/a)^{r}
  \end{equation}
  for all $z\in \Omega_a$ and $\varepsilon\le \varepsilon_r a$.
  \end{proposition}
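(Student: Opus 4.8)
The plan is to reduce \eqref{eq:2.1} to a single weighted $L^2$ estimate for the reproducing kernel and then to extract the power $(\varepsilon/a)^r$ from the plurisubharmonicity of $-\log(-\rho)$. Write $K=K_{\Omega,\varphi}(\cdot,z)$. The starting point is the trivial pointwise bound on the boundary layer: if $-\rho\le\varepsilon$ then $1\le\varepsilon^r(-\rho)^{-r}$, so
\[
\int_{-\rho\le\varepsilon}|K|^2e^{-\varphi}\le\varepsilon^r\int_\Omega|K|^2(-\rho)^{-r}e^{-\varphi}.
\]
Thus it suffices to prove the weighted bound
\[
\int_\Omega|K|^2(-\rho)^{-r}e^{-\varphi}\le C_r\,a^{-r}\,K_{\Omega_a,\varphi}(z),\qquad z\in\Omega_a .
\]
Recall that by the reproducing property $\int_\Omega|K|^2e^{-\varphi}=K_{\Omega,\varphi}(z)$, and by domain monotonicity $K_{\Omega,\varphi}(z)\le K_{\Omega_a,\varphi}(z)$ because $z\in\Omega_a\subset\Omega$. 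So the whole content is that replacing the weight $e^{-\varphi}$ by the \emph{singular} weight $(-\rho)^{-r}e^{-\varphi}$ costs only the bounded factor $a^{-r}$.

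The geometric engine is the fact that $\phi:=-\log(-\rho)$ is psh: writing $\rho=-e^{-\phi}$ and using $i\partial\bar\partial\rho=e^{-\phi}(i\partial\bar\partial\phi-i\partial\phi\wedge\bar\partial\phi)\ge0$ gives $i\partial\bar\partial\phi\ge i\partial\phi\wedge\bar\partial\phi\ge0$. Hence, for $0<r<1$, the function $\psi:=r\phi$ is a psh function with self-bounded gradient: $i\partial\psi\wedge\bar\partial\psi=r^2\,i\partial\phi\wedge\bar\partial\phi\le r\,i\partial\phi\wedge\bar\partial\phi\le r\,i\partial\bar\partial\phi=i\partial\bar\partial\psi$. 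This is precisely the hypothesis under which the Donnelly--Fefferman/Berndtsson machinery applies on the pseudoconvex domain $\Omega$. I would invoke (or reprove by the standard twisted $\bar\partial$-argument) the weighted kernel estimate
\[
\int_\Omega|K_{\Omega,\varphi}(\cdot,z)|^2\,e^{\psi}\,e^{-\varphi}\le C\,e^{\psi(z)}\,K_{\Omega,\varphi}(z),
\]
whose proof turns a cut-off of $K$ into a global holomorphic competitor, the self-bounded gradient converting the singular factor $e^{\psi}=(-\rho)^{-r}$ into a constant $C$ depending only on $r$. Since $z\in\Omega_a$ means $-\rho(z)>a$, we get $e^{\psi(z)}=(-\rho(z))^{-r}\le a^{-r}$, and together with $K_{\Omega,\varphi}(z)\le K_{\Omega_a,\varphi}(z)$ this is exactly the weighted bound above; the factor $a^{-r}$ thus appears automatically rather than by hand. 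Feeding this back into the layer reduction yields \eqref{eq:2.1}.

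The main obstacle is the weighted kernel estimate itself, i.e. controlling the singular weight $(-\rho)^{-r}$ uniformly for \emph{every} $r\in(0,1)$. The constant $C$ produced by the self-bounded gradient inequality degenerates as $r\uparrow1$ (the admissible ratio is $r<1$, and the slack $r(1-r)$ governs the constant), which is precisely why the statement excludes $r=1$; tracking this $r$-dependence carefully is the delicate part. A secondary technical point, which I expect will force the threshold $\varepsilon_r$, is regularization: since $\rho$ is only continuous, $-\log(-\rho)$ and its Levi form exist only as currents, so I would first approximate $\rho$ by smooth psh functions (Richberg-type smoothing on relatively compact subdomains), run the $\bar\partial$-argument there with stable constants, and pass to the limit, using $\varepsilon\le\varepsilon_r a$ to stay in the region where the approximation and the bound $e^{\psi(z)}\le a^{-r}$ are valid.
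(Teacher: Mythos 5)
Your argument is correct in outline and rests on the same underlying technology as the paper (the Berndtsson--Charpentier self-bounded-gradient machinery for $\psi=-r\log(-\rho)$, whose SBG constant $r<1$ you compute correctly), but it routes through a different key lemma. You reduce to the weighted kernel estimate $\int_\Omega|K_{\Omega,\varphi}(\cdot,z)|^2e^{\psi}e^{-\varphi}\le C\,e^{\psi(z)}K_{\Omega,\varphi}(z)$, which is a true Donnelly--Fefferman-type statement but is not the literal theorem of Berndtsson--Charpentier; you would have to either run the twisted $\dbar$-argument yourself or derive it by duality from the projection bound together with the kernel comparison $K_{\Omega,\varphi+\psi}(z)\lesssim e^{\psi(z)}K_{\Omega,\varphi}(z)$. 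The paper instead quotes Berndtsson--Charpentier exactly as stated, namely the boundedness of the Bergman projection $P_{\Omega,\varphi}$ on $L^2(\Omega,(-\rho)^{-r}e^{-\varphi})$, and applies it to the truncated kernel $f=\chi_{\Omega_a}K_{\Omega_a,\varphi}(\cdot,z)$; the one-line identity $P_{\Omega,\varphi}(f)=K_{\Omega,\varphi}(\cdot,z)$ (from the reproducing property on $\Omega_a$) then yields $\int_\Omega|K_{\Omega,\varphi}(\cdot,z)|^2(-\rho)^{-r}e^{-\varphi}\le C_r\int_{\Omega_a}|K_{\Omega_a,\varphi}(\cdot,z)|^2(-\rho)^{-r}e^{-\varphi}\le C_ra^{-r}K_{\Omega_a,\varphi}(z)$ in one stroke, since $-\rho>a$ on $\Omega_a$. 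Your version produces the factor $a^{-r}$ from $e^{\psi(z)}\le a^{-r}$ and then uses domain monotonicity $K_{\Omega,\varphi}(z)\le K_{\Omega_a,\varphi}(z)$, which is fine and in fact gives a marginally sharper intermediate bound; the trade-off is that your key lemma requires an extra $\dbar$-construction (and the regularization of the merely continuous $\rho$ that you rightly flag), whereas the paper's choice of test function makes the proposition an immediate corollary of the cited projection estimate. The final layer reduction $(-\rho)^{-r}\ge\varepsilon^{-r}$ on $\{-\rho\le\varepsilon\}$ is identical in both proofs.
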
 
  
  Proposition \ref{prop:BergmanIntegral} has been verified in \cite{Chen17} for the case $\varphi=0$.  Although the general case is  the same,   we still include a proof here for the sake of completeness.
  
  \begin{proof}
  Let $P_{\Omega,\varphi}: L^2(\Omega,\varphi)\rightarrow A^2(\Omega,\varphi)$ be the Bergman projection.   Then we have
  \begin{equation}\label{eq:BCh}
  \int_\Omega |P_{\Omega,\varphi}(f)|^2 (-\rho)^{-r}e^{-\varphi} \le C_r \int_\Omega |f|^2 (-\rho)^{-r}e^{-\varphi}
  \end{equation}
  for any $0<r<1$ and $f\in L^2(\Omega,\varphi)$ (cf. \cite{BCh}).  Let $\chi_E$ denote the characteristic function of a set $E$.  
  For $f:=\chi_{\Omega_a} K_{\Omega_a,\varphi}(\cdot,z)$,  we have  
\begin{eqnarray}\label{eq:BCh_2}
P_{\Omega,\varphi}(f)(\zeta) & = & \int_\Omega \chi_{\Omega_a}(\cdot) K_{\Omega_a,\varphi}(\cdot,z)\overline{K_{\Omega,\varphi}(\cdot,\zeta)}\nonumber\\
& = & \overline{\int_{\Omega_a} K_{\Omega,\varphi}(\cdot,\zeta) \overline{K_{\Omega_a,\varphi}(\cdot,z)}}\nonumber\\
& = &  K_{\Omega,\varphi}(\zeta,z)
\end{eqnarray} 
in view of the reproducing property.  
   By \eqref{eq:BCh} and \eqref{eq:BCh_2},  we obtain
    \begin{eqnarray*}
   \int_\Omega |K_{\Omega,\varphi}(\cdot,z)|^2 (-\rho)^{-r} e^{-\varphi} & \le & C_r \int_{\Omega_a} | K_{\Omega_a,\varphi}(\cdot,z) |^2 (-\rho)^{-r} e^{-\varphi}\\
   & \le & C_r a^{-r} K_{\Omega_a,\varphi}(z),
    \end{eqnarray*}
    from which the assertion immediately follows,  for 
    $$
    \int_\Omega |K_{\Omega,\varphi}(\cdot,z)|^2 (-\rho)^{-r} e^{-\varphi} \ge \varepsilon^{-r}\int_{-\rho\le \varepsilon} |K_{\Omega,\varphi}(\cdot,z)|^2 e^{-\varphi}.
    $$
  \end{proof}
  
  We also need the following Bergman type inequality.

\begin{lemma}\label{lm:Bergman_ineq}
Suppose that $1\leq{p}\leq2$. Let $a>0$ and $d(z):=d(z,\partial\Omega_a)$. For $\varphi:=(2-p)\log|m_p(\cdot,z)|$, we have
$$
K_{\Omega_{a},\varphi}(z)\le C_n^{2/p} d(z)^{-4n/p} |\Omega|^{2/p-1},
$$  
where $C_n$ denotes a generic constant which depends on $n$.
\end{lemma}

\begin{proof}

For any $f\in{A^2(\Omega_a,\varphi)}\setminus\{0\}$, we have
\begin{eqnarray*}
|f(z)|^2
&\leq& \frac{1}{|B(z,d(z)/2)|}\int_{B(z,d(z)/2)}|f|^2\\
&\leq& {C_n}d(z)^{-2n}\sup_{B(z,d(z)/2)}{e^\varphi}\int_{B(z,d(z)/2)}|f|^2e^{-\varphi}\\
&\leq& {C_n}d(z)^{-2n}\sup_{B(z,d(z)/2)}{e^\varphi}\int_{\Omega_a}|f|^2e^{-\varphi},
\end{eqnarray*}
in view of the mean-value inequality. It follows from \eqref{eq:def_weighted_kernel} that
\begin{equation}\label{eq:Bergman_ineq_1}
K_{\Omega_a,\varphi}(z)\leq{C_n}d(z)^{-2n}\sup_{B(z,d(z)/2)}{e^\varphi}=C_n d(z)^{-2n} \left(\sup_{B(z,d(z)/2)} |m_p(\cdot,z)|^p\right)^{2/p-1}.
\end{equation}
For any $\zeta\in{B(z,d(z)/2)}$, we have $B(\zeta,d(z)/2)\subset\Omega_a\subset\Omega$. Again, since $|m_p(\cdot,z)|^p$ is a plurisubharmonic function, the mean-value inequality yields that
\begin{eqnarray*}
|m_p(\zeta,z)|^p
&\leq& \frac{1}{|B(\zeta,d(z)/2)|}\int_{B(\zeta,d(z)/2)}|m_p(\cdot,z)|^p\\
&\leq& \frac{1}{|B(\zeta,d(z)/2)|}\int_\Omega|m_p(\cdot,z)|^p\\
&=& C_n d(z)^{-2n} m_p(z)^p\\
&=& \frac{C_nd(z)^{-2n}}{K_p(z)}.
\end{eqnarray*}
Since
\[
K_p(z)=\sup_{f\in{A^p(\Omega)}\setminus\{0\}}\frac{|f(z)|^p}{\|f\|_p^p}\geq\frac{1}{|\Omega|},
\]
it follows that
\[
|m_p(\zeta,z)|^p\leq{C_nd(z)^{-2n}|\Omega|},\ \ \ \forall\,\zeta\in{B(z,d(z)/2)}.
\]
This together with \eqref{eq:Bergman_ineq_1} yield the assertion.
\end{proof}
  
  \begin{proof}[Proof of Proposition \ref{prop:Integ}]
 For any $0<\alpha<\alpha(\Omega)$,  we take a continuous negative plurisubharmonic function $\rho$ on $\Omega$ such that $-\rho\le C_\alpha \delta^\alpha$ for suitable constant $C_\alpha>0$.  By Theorem \ref{th:p<2},  we have $K_{2,p,z}(\cdot,z)=K_p(z)m_p(\cdot,z)$.   Apply Proposition \ref{prop:BergmanIntegral} with   $\varphi:=(2-p)\log |m_p(\cdot,z)|$ and $a=-\rho(z)/2$,  we obtain for $\varepsilon\ll1$,
  \begin{eqnarray*}
  \int_{-\rho\le \varepsilon} |m_p(\cdot,z)|^p  & = & K_p(z)^{-2} \int_{-\rho\le \varepsilon} |m_p(\cdot,z)|^{p-2} |K_{2,p,z}(\cdot,z)|^2\\
  & \le & C_{r} \frac{K_{\Omega_{a},\varphi}(z)}{K_p(z)^2}\left(\frac{\varepsilon}{-\rho(z)}\right)^r.
  \end{eqnarray*}
Define $d(z)=d(z,\partial \Omega_a)$. As $1\le p\le 2$, it follows from Lemma \ref{lm:Bergman_ineq} and the fact $K_p(z)\geq1/|\Omega|$ that
  \begin{equation}\label{eq:B-Integ_1}
  \int_{\delta\le \varepsilon} |m_p(\cdot,z)|^p\lesssim \varepsilon^{r\alpha},
  \end{equation}
  where the implicit constant depends only on $n,r,\alpha,z$ and $|\Omega|$.  
  This combined with the mean-value inequality gives
  $$
  |m_p(\zeta,z)|^p \le C_n \delta(\zeta)^{-2n} \int_{\delta\le 2\delta(\zeta)} |m_p(\cdot,z)|^p\lesssim \delta(\zeta)^{r\alpha-2n},\ \ \ \forall\, \zeta\in\Omega.
  $$
  Let $\tau>0$.  Then we have
  \begin{eqnarray*}
  \int_\Omega |m_p(\cdot,z)|^{p+\tau} & \lesssim & 1+ \sum_{k=1}^\infty \int_{2^{-k-1}<\delta \le 2^{-k}} |m_p(\cdot,z)|^{p+\tau}\\
  & \lesssim & 1+ \sum_{k=1}^\infty 2^{k\tau(2n-r\alpha)/p} \int_{\delta \le 2^{-k}} |m_p(\cdot,z)|^{p}\\
  & \lesssim & 1+ \sum_{k=1}^\infty 2^{k\tau(2n-r\alpha)/p-kr\alpha}\\
  & < & \infty
  \end{eqnarray*}
  provided $\tau(2n-r\alpha)/p<r\alpha$,  i.e.,  $\tau<pr\alpha/(2n-r\alpha)$.   Since $r$ and $\alpha$ can be arbitrarily close to $1$ and $\alpha(\Omega)$ respectively,  we are done. 
  \end{proof} 
  
  \begin{proof}[Proof of Theorem \ref{th:LlogL}]
Without loss of generality,  we assume $s>p$.  Since $|\Omega|^{1/t}\cdot K_t(z)^{1/t}$ is  nonincreasing in $t$ (cf.  (6.3) in \cite{CZ}),  we see that 
  $$
  K_p(z)\ge |\Omega|^{p/s-1}\cdot K_s(z)^{p/s} = K_s(z)\left( |\Omega|K_s(z)\right)^{p/s-1} \ge K_s(z)- C(s-p)
  $$
  when $s$ is sufficiently close to $p$,
  where $C$ is  a suitable constant depending only on $z,\Omega$. On the other hand,  
 we infer from \eqref{eq:B-Integ_1}  that
\begin{eqnarray*}
\int_{\delta\le \varepsilon} |K_p(\cdot,z)|^{s} & \le &  \sum_{k=0}^\infty \int_{2^{-k-1}\varepsilon <\delta \le 2^{-k}\varepsilon} |K_p(\cdot,z)|^{p+s-p}\\
  & \lesssim &  \sum_{k=0}^\infty (2^{-k}\varepsilon)^{-(s-p)(2n-r\alpha)/p} \int_{\delta \le 2^{-k}\varepsilon} |K_p(\cdot,z)|^{p}\\
  & \lesssim &  \sum_{k=0}^\infty (2^{-k}\varepsilon)^{-(s-p)(2n-r\alpha)/p+r\alpha}\\
  & \lesssim & \varepsilon^{\alpha(\Omega)/2}
\end{eqnarray*} 
provided that $1-r$ and $s-p$ are sufficiently small.   
  Thus
  \begin{eqnarray*}
 \int_\Omega |K_p(\cdot,z)|^s & = &  \int_{\delta> \varepsilon} |K_p(\cdot,z)|^p |K_p(\cdot,z)|^{s-p} + \int_{\delta\le \varepsilon} |K_p(\cdot,z)|^s\\
 & \le & (C_1\varepsilon^{-2n/p})^{s-p} \int_\Omega |K_p(\cdot,z)|^p + C_2 \varepsilon^{\alpha(\Omega)/2}.
  \end{eqnarray*}
  Now we take $\varepsilon=(s-p)^{2/\alpha(\Omega)}$.  Since
  $$
  (C_1\varepsilon^{-2n/p})^{s-p}\le 1+ C_3 (s-p)|\log(s-p)|\ \ \ \text{and}\ \ \ \varepsilon^{\alpha(\Omega)/2}=s-p,
  $$
  it follows that
  \begin{eqnarray*}
  \int_\Omega |K_p(\cdot,z)|^s & \le & \int_\Omega |K_p(\cdot,z)|^p + C_4 (s-p)|\log (s-p)|\\
  & = & K_p(z)^{p-1} + C_4 (s-p)|\log (s-p)|.
  \end{eqnarray*}
  Thus 
  \begin{eqnarray*}
  K_s(z)\ge \frac{K_p(z)^s}{\int_\Omega |K_p(\cdot,z)|^s} & \ge & \frac{K_p(z)^p -C_5(s-p)}{K_p(z)^{p-1}+C_4(s-p)|\log(s-p)|}\\
  &\ge&  K_p(z)-C_6(s-p)|\log(s-p)|.
  \end{eqnarray*}
  \end{proof}

Finally, we mention that both Theorem \ref{th:LlogL} and Theorem \ref{th:p<2} fail  for  $2<p<\infty$ in general (see \cite{CX}, Section 4). It is interesting to ask the following question:

\begin{problem}[B{\l}ocki]
Does there exist for each $2<p<\infty$ and $z\in \Omega$ a weight function $\varphi_z$ such that $K_p(\cdot,z)=K_{\Omega,\varphi_z}(\cdot,z)$?
\end{problem}

\section{An upper estimate for $i\partial\overline{\partial}\log{K_p(z;X)}$ ($1<p<2$)}

\begin{proof}[Proof of Theorem \ref{th:Levi_upper_p_le_2}]
Let $1<p<2$ and $1/p+1/q=1$, so that $q>2$. We shall apply the duality method. Set
\begin{equation}\label{eq:gz}
g_z:=\frac{|m_p(\cdot,z)|^{p-2}m_p(\cdot,z)}{m_p(z)^p}.
\end{equation}
Since $\|m_p(\cdot,z)\|_p=m_p(z)$,  we have $g_z\in{L^q(\Omega)}$ and 
\begin{equation}\label{eq:gz_norm}
\lVert{g_z}\rVert_q=m_p(z)^{-1}=K_p(z)^{1/p}=\lVert{L_z}\rVert.
\end{equation}
Moreover, it follows from \eqref{eq:RPF} that
\begin{equation}\label{eq:reproducing I}
L_z(f)=f(z)=\int_\Omega{f}\overline{g}_z,\ \ \ \forall\, f\in{A^p(\Omega)}.
\end{equation}
Our goal is to find an effective upper bound of
\[
K_p(z+tv)^{q/p}-K_p(z)^{q/p} = \lVert{L_{z+tv}}\rVert^q-\lVert{L_z}\rVert^q = \|g_{z+tv}\|^q_q-\|g_z\|^q_q
\]
for any unit vector $v$ in $\mathbb{R}^{2n}=\mathbb{C}^n$, in view of Lemma \ref{lm:Levi_upper_estimate}.

We introduce the following linear functional
\[
L_{z,v}(f):=\frac{\partial{f}}{\partial{v}}(z),\ \ \ f\in{A^p(\Omega)}.
\]
Cauchy's estimates imply that  $L_{z,v}$ is a bounded linear functional. By the Hahn-Banach theorem, $L_{z,v}$ can be extended to a continuous linear functional $\widetilde{L}_{z,v}\in{(L^p(\Omega))^*}$ with $\lVert{L_{z,v}}\rVert=\lVert{\widetilde{L}_{z,v}}\rVert$.  Let $g_{z,v}\in{L^q(\Omega)}$  be the unique representation of $\widetilde{L}_{z,v}$, so that $\lVert{g_{z,v}}\rVert_q=\lVert{\widetilde{L}_{z,v}}\rVert$ and
\begin{equation}\label{eq:reproducing II}
L_{z,v}(f)=\widetilde{L}_{z,v}(f)=\int_\Omega{f}\overline{g}_{z,v},\ \ \ \forall\, f\in{A^p(\Omega)}.
\end{equation}
\eqref{eq:reproducing I} together with \eqref{eq:reproducing II} yield
\[
(L_z+tL_{z,v})(f)=\int_\Omega{f}\,\overline{g_z+tg_{z,v}},\ \ \ \forall\,{f\in{A^p(\Omega)}}.
\]
In what follows we assume that $\Omega'\subset\subset\Omega''\subset\subset\Omega$ are open sets such that $z\in\Omega'$, $\varepsilon_0<\min\{1,d(\Omega',\partial\Omega'')\}$ and $0<t\ll\varepsilon_0$. Apply \eqref{eq:ineq_geq1_1} for $a=g_{z+tv}$ and $b=g_z+tg_{z,v}$, we get
\begin{eqnarray}\label{eq:regular_upper}
K_p(z+tv)^{q/p}-K_p(z)^{q/p}
&=& \|g_{z+tv}\|_q^q-\|g_z\|_q^q\nonumber\\
&\leq& \|g_z+tg_{z,v}\|_q^q-\|g_z\|_q^q\nonumber\\
& & + q\,\mathrm{Re}\int_\Omega|g_{z+tv}|^{q-2}\overline{g}_{z+tv}(g_{z+tv}-g_z-tg_{z,v})\nonumber\\
&=:& \|g_z+tg_{z,v}\|_q^q-\|g_z\|_q^q+I.
\end{eqnarray}
A direct computation yields
\[
|g_z|^{q-2}\overline{g}_z=\frac{|m_p(\cdot,z)|^{(p-1)(q-2)+(p-2)}\overline{m_p(\cdot,z)}}{m_p(z)^{p(q-1)}}=\frac{\overline{m_p(\cdot,z)}}{m_p(z)^q},
\]
for $(p-1)(q-2)+p-2=0$ and $p(q-1)=q$. This together with \eqref{eq:reproducing I} and \eqref{eq:reproducing II} imply that
\begin{eqnarray*}
I &=& \frac{q}{m_p(z+tv)^q}\,\mathrm{Re}\int_\Omega\overline{m_p(\cdot,z+tv)}(g_{z+tv}-g_z-tg_{z,v})\nonumber\\
&=& \frac{q}{m_p(z+tv)^q}\,\mathrm{Re}\big\{(L_{z+tv}-L_z-tL_{z,v})(m_p(\cdot,z+tv))\big\}\nonumber\\
&=& \frac{q}{m_p(z+tv)^q}\,\mathrm{Re}\left\{m_p(z+tv,z+tv)-m_p(z,z+tv)-t\left.\frac{\partial{m_p(\cdot,z+tv)}}{\partial{v}}\right|_z\right\}\nonumber\\
&\le & \frac{q}{m_p(z+tv)^q}\,\mathrm{Re}\left\{\frac{t^2}{2}\cdot\left.\frac{\partial^2{m_p(\cdot,z+tv)}}{\partial{v^2}}\right|_z\right\}+\frac{1}{6}\sup_{\Omega''}|D^3m_p(\cdot,z+tv)|\cdot|t|^3\\
&\le & \frac{q}{m_p(z)^q}\,\mathrm{Re}\left\{\frac{t^2}{2}\cdot\left.\frac{\partial^2{m_p(\cdot,z)}}{\partial{v^2}}\right|_z\right\}+C|t|^3
\end{eqnarray*}
for some generic constant $C=C(\Omega'',\Omega)>0$. Here we have used the the fact that  
\[
z\mapsto\left.\frac{\partial^2{m_p(\cdot,z)}}{\partial{v^2}}\right|_z
\]
is locally Lipschitz continuous,  whose  proof is essentially the same as that of Lemma \ref{lm:C1/2_K_1}.

Recall that one may identify $v$ with certain complex tangent vector  $X$,   and $e^{i\theta}v$  is the real vector corresponding to $e^{i\theta}X$.   We claim that
\begin{equation}\label{eq:g_theta}
g_{z,e^{i\theta}v}=e^{-i\theta}g_{z,v}.
\end{equation}
To see this,   note that
\[
L_{z,e^{i\theta}v}(f)=e^{i\theta}Xf(z)=e^{i\theta}\frac{\partial{f}}{\partial{v}}(z)=e^{i\theta}L_{z,v}(f)=\int_\Omega f\overline{e^{-i\theta}g_{z,v}},\ \ \ \forall\,f\in{A^p(\Omega)}
\]
in view of \eqref{eq:complex_real_derivative_holomophic},  from which  \eqref{eq:g_theta} follows since 
 $$
 \|e^{-i\theta}g_{z,v}\|_q=\|g_{z,v}\|_q=\|L_{z,v}\|=\|L_{z,e^{i\theta}v}\|.
 $$
 Moreover,  \eqref{eq:complex_real_derivative_holomophic} also yields
\[
\frac{\partial^2{m_p(\cdot,z)}}{\partial{(e^{i\theta}v)^2}}=e^{2i\theta}X^2m_p(\cdot,z).
\]
Thus
\begin{eqnarray}\label{eq:Levi_upper_1}
&& K_p(z+te^{i\theta}X)^{q/p}-K_p(z)^{q/p}\\
 &\leq& \|g_z+te^{-i\theta}g_{z,v}\|_q^q-\|g_z\|_q^q+\frac{q}{m_p(z)^q}\mathrm{Re}\left\{\frac{t^2e^{2i\theta}}{2}\cdot\left.X^2m_p(\cdot,z)\right|_z\right\}+C|t|^3.\nonumber
\end{eqnarray}

Next, we apply \eqref{eq:ineq_geq4} to estimate the difference $\|g_z+te^{-i\theta}g_{z,v}\|_q^q-\|g_z\|_q^q$. It follows that
\begin{eqnarray}
& &\|g_z+te^{-i\theta}g_{z,v}\|_q^q-\|g_z\|_q^q\nonumber\\
&\leq& qt\,\mathrm{Re}\left\{e^{-i\theta}\int_\Omega|g_z|^{q-2}\overline{g}_zg_{z,v}\right\} + \frac{q^2t^2}{4}\|g_z\|_q^{q-2}\|g_{z,v}\|_q^2\nonumber\\
& & + \frac{q(q-2)t^2}{4}\,\mathrm{Re}\left\{e^{-2i\theta}\int_\Omega|g_z|^{q-4}\overline{g}_z^2g_{z,v}^2\right\}+R_q(t),\label{eq:Levi_upper_2}
\end{eqnarray}
where
\[
R_q(t) = o(t^2),\ \ \ (t\rightarrow0+)
\]
holds uniformly with respect to $\theta$, in view of H\"{o}lder's inequality.

Set
\[
\widehat{\mathcal{M}}_p^{(1)}(z;X):=\sup\left\{|Xf(z)|: f\in{A^p(\Omega)},\ \|f\|_p=1\right\}.
\]
Then we have $\|L_{z,v}\|=\widehat{\mathcal{M}}_p^{(1)}(z;X)$. By \eqref{eq:Levi_upper_1} and \eqref{eq:Levi_upper_2}, we have
\begin{eqnarray}\label{eq:Levi_upper_3}
&& K_p(z+te^{i\theta}X)^{q/p}-K_p(z)^{q/p}\nonumber\\
&\leq& qt\,\mathrm{Re}\left\{e^{-i\theta}\int_\Omega|g_z|^{q-2}\overline{g}_zg_{z,v}\right\} +\frac{q^2t^2}{4}K_p(z)^{(q-2)/p}\widehat{\mathcal{M}}_p^{(1)}(z;X)^2\nonumber\\
& & +\frac{q t^2}{2m_p(z)^q}\,\mathrm{Re}\left\{e^{2i\theta}\left.X^2m_p(\cdot,z)\right|_z\right\}\nonumber\\
& & + \frac{q(q-2)t^2}{4}\,\mathrm{Re}\left\{e^{-2i\theta}\int_\Omega|g_z|^{q-4}\overline{g}_z^2g_{z,v}^2\right\} + o(t^2).
\end{eqnarray}
Integrate with respect to $\theta$ from 0 to $2\pi$, we obtain
\begin{eqnarray*}
&& \frac{1}{t^2}\left(\frac{1}{2\pi}\int^{2\pi}_0K_p(z+te^{i\theta}X)^{q/p}d\theta-K_p(z)^{q/p}\right) \\
& \leq & \frac{q^2}{4} K_p(z)^{(q-2)/p}\widehat{\mathcal{M}}_p^{(1)}(z;X)^2+\varepsilon
\end{eqnarray*}
for any given $\varepsilon>0$,  provided $0<t\ll1$.  Letting $\varepsilon\rightarrow0$, we infer from Lemma \ref{lm:Levi_upper_estimate} that
\begin{equation}\label{eq:Levi_upper}
i\partial\bar{\partial} K_p^{q/p}(z;X)=X\overline{X}K_p(z)^{q/p}\leq\frac{q^2}{4}K_p(z)^{(q-2)/p}\widehat{\mathcal{M}}_p^{(1)}(z;X)^2
\end{equation}
Since
\begin{eqnarray*}
X\overline{X}\log{K_p}(z) &=& \frac{p}{q}X\overline{X}\log{K_p}(z)^{q/p}\\
&=& \frac{p}{q}\frac{X\overline{X}K_p(z)^{q/p}}{K_p(z)^{q/p}}-\frac{p}{q}\frac{|XK_p(z)^{q/p}|^2}{K_p(z)^{2q/p}}\\
&=& \frac{p}{q}\frac{X\overline{X}K_p(z)^{q/p}}{K_p(z)^{q/p}}-\frac{q}{p}\frac{|XK_p(z)|^2}{K_p(z)^2},
\end{eqnarray*}
we have
$$
i\partial\bar{\partial} \log{K_p}(z;X)=X\overline{X}\log{K_p}(z)\leq\frac{pq}{4}\frac{\widehat{\mathcal{M}}_p^{(1)}(z;X)^2}{K_p(z)^{2/p}}-\frac{q}{p}\frac{|XK_p(z)|^2}{K_p(z)^2}, 
$$
from which \eqref{eq:Levi_upper_0} immediately follows since $p=q/(q-1)$.
\end{proof}

\begin{remark}
Note that \eqref{eq:partial derivative} implies
\[
XK_p(z)=\frac{p}{2}XK_p(\cdot,z)|_z.
\]
It follows that
\begin{equation}\label{eq:B_p_K}
\widehat{B}_p(z;X)\geq{K_p(z)^{-1/p}}\frac{|XK_p(\cdot,z)|}{\|K_p(\cdot,z)\|_p}=\frac{|XK_p(\cdot,z)|}{K_p(z)}.
\end{equation}
Thus
\[
\frac{q}{p}\frac{|XK_p(z)|^2}{K_p(z)^2}\leq\frac{pq}{4}\widehat{B}_p(z;X)^2,
\]
which suggests that the coefficient $pq/4$ in \eqref{eq:Levi_upper_0} is probably the best. On the other hand, it was proved in \cite{CZ} that
\[
i\partial\bar{\partial} \log{K_p}(z;X)\geq{C(z;X)>0},
\]
where $C(z;X)$ is the Carath\'{e}odory metric. Hence \eqref{eq:B_p_K} is indeed a strict inequality.
\end{remark}

As a supplement of Theorem \ref{th:Levi_upper_p_le_2},  we present the following

\begin{proposition}\label{prop:Equality_p=2}
Equality in \eqref{eq:Levi_upper_0} holds for $p=q=2$.
\end{proposition}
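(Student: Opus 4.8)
The plan is to specialize the inequality \eqref{eq:Levi_upper_0} to $p=q=2$ and reduce the equality claim to a single classical identity for the Bergman kernel. Writing $\widehat{\mathcal{M}}_2^{(1)}(z;X)=K_2(z)^{1/2}\widehat{B}_2(z;X)=\sup\{|Xf(z)|:f\in A^2(\Omega),\,\|f\|_2=1\}$ and using
\[
i\partial\bar{\partial}\log K_2(z;X)=X\overline{X}\log K_2(z)=\frac{X\overline{X}K_2(z)}{K_2(z)}-\frac{|XK_2(z)|^2}{K_2(z)^2},
\]
I observe that at $p=q=2$ the coefficient $pq/4$ equals $1$, that $\widehat{B}_2(z;X)^2=\widehat{\mathcal{M}}_2^{(1)}(z;X)^2/K_2(z)$, and that the two terms $-|XK_2(z)|^2/K_2(z)^2$ on the two sides cancel. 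Hence equality in \eqref{eq:Levi_upper_0} is \emph{equivalent} to the single identity
\[
X\overline{X}K_2(z)=\widehat{\mathcal{M}}_2^{(1)}(z;X)^2,
\]
and the entire proposition reduces to proving this.

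To establish the identity I would exploit the Hilbert space structure of $A^2(\Omega)$. For $p=2$ one has $m_2(z)^2=1/K_2(z)$ and $m_2(\cdot,z)=K_2(\cdot,z)/K_2(z)$, so \eqref{eq:gz} gives $g_z=K_2(\cdot,z)$ and \eqref{eq:reproducing I} becomes the usual reproducing property $f(w)=\int_\Omega f\,\overline{K_2(\cdot,w)}$. Differentiating this in $w$ along $X$ shows that the functional $L_{z,X}(f)=Xf(z)$ is represented by $h_z:=\overline{X}^{(2)}K_2(\cdot,w)\big|_{w=z}$ (the $\overline{X}$-derivative in the second slot), i.e. $Xf(z)=\langle f,h_z\rangle$ for all $f\in A^2(\Omega)$. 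Taking $f=h_z$ yields
\[
\widehat{\mathcal{M}}_2^{(1)}(z;X)^2=\|L_{z,X}\|^2=\|h_z\|^2=Xh_z(z)=X^{(1)}\overline{X}^{(2)}K_2(\zeta,w)\big|_{\zeta=w=z}.
\]
On the other hand, differentiating the diagonal value $K_2(z)=K_2(z,z)$ and using that $K_2(\zeta,w)$ is holomorphic in $\zeta$ and anti-holomorphic in $w$ (so the mixed terms $\partial_{\bar\zeta_k}K_2$ and $\partial_{w_j}\overline{\partial_{\bar w_k}}K_2$ vanish) gives $X\overline{X}K_2(z)=X^{(1)}\overline{X}^{(2)}K_2(\zeta,w)\big|_{\zeta=w=z}$ as well. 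Comparing the two displays closes the argument. Equivalently, with a complete orthonormal basis $\{e_j\}$ of $A^2(\Omega)$ one has $K_2(z)=\sum_j|e_j(z)|^2$, term-by-term differentiation gives $X\overline{X}K_2(z)=\sum_j|Xe_j(z)|^2$, and Cauchy--Schwarz applied to $Xf(z)=\sum_j\langle f,e_j\rangle\,Xe_j(z)$ identifies $\sum_j|Xe_j(z)|^2$ with $\sup_{\|f\|_2=1}|Xf(z)|^2$, the extremizer being $f\propto\sum_j\overline{Xe_j(z)}\,e_j$.

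The only technical point, and the step I would treat with care, is the legitimacy of the holomorphic versus anti-holomorphic bookkeeping in differentiating the kernel on the diagonal (respectively, the term-by-term differentiation of $\sum_j e_j(z)\overline{e_j(w)}$). This is routine: by Cauchy's estimates the kernel series converges locally uniformly together with all of its derivatives, so all differentiations under the sum or under the integral sign are justified on compact subsets of $\Omega$, and since $K_2(\cdot)$ is real-analytic no further regularity issues arise. I do not expect any genuine obstacle beyond this standard justification.
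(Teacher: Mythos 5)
Your proposal is correct, and it follows the same overall strategy as the paper: both arguments observe that at $p=q=2$ the terms $-|XK_2(z)|^2/K_2(z)^2$ on the two sides cancel, so the proposition reduces to the single identity $X\overline{X}K_2(z)=\widehat{\mathcal{M}}_2^{(1)}(z;X)^2$. Where you differ is in how this identity is established. The paper chooses an orthonormal basis $\{h_j\}$ of $A^2(\Omega)$ adapted to the flag $A_1\subset A_0\subset A^2(\Omega)$, where $A_0=\{f:f(z)=0\}$ and $A_1=\{f\in A_0: Xf(z)=0\}$; with this choice $Xh_j(z)=0$ for $j\ge 2$, so $X\overline{X}K_2(z)=|Xh_0(z)|^2+|Xh_1(z)|^2$ and the supremum computation becomes a two-dimensional Cauchy--Schwarz with an explicit extremizer, sidestepping any convergence issues for the infinite series. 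You instead work with an arbitrary orthonormal basis (equivalently, with the Riesz representative $h_z=\overline{X}^{(2)}K_2(\cdot,w)|_{w=z}$ of the functional $f\mapsto Xf(z)$) and identify both sides with the mixed derivative $X^{(1)}\overline{X}^{(2)}K_2(\zeta,w)|_{\zeta=w=z}$. This is equally valid and arguably more conceptual, but it carries the extra (routine) burden you acknowledge: one must justify that $\sum_j|Xe_j(z)|^2<\infty$, that $h_z\in A^2(\Omega)$, and that term-by-term differentiation of the kernel series is legitimate --- all of which follow from Cauchy's estimates and the boundedness of the derivative-evaluation functional. The paper's adapted basis buys a completely finite-dimensional verification; your route buys a formula ($X\overline{X}K_2(z)=X^{(1)}\overline{X}^{(2)}K_2|_{\mathrm{diag}}$) that is standard in Bergman geometry and makes the extremizer explicit. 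No gap in either.
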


\begin{proof}
Since 
$$
i\partial\bar{\partial} \log K_2(z;X)=\frac{X\overline{X}K_2(z)}{K_2(z)}-\frac{|X K_2(z)|^2}{K_2(z)^2},
$$
it suffices to show
\begin{equation}\label{eq:Levi_p=2}
X\overline{X}K_2(z)=\widehat{\mathcal{M}}_2^{(1)}(z;X)^2.
\end{equation}
We define
\[
A_0:=\{f\in{A^2(\Omega)}:\ f(z)=0\},\ \ \ A_1:=\{f\in A_0:\ Xf(z)=0\}.
\]
Take an orthonormal basis $\{h_0,h_1,h_2,\cdots\}$ of $A^2(\Omega)$ which satisfies $h_0\in{A_0^\perp}$,  $h_1\in{A_1^\perp\cap{A_0}}$ and $h_j\in A_1$ for $j\ge 2$.  Then we have $K_2(\cdot)=\sum^\infty_{j=0}|h_j(\cdot)|^2$,  so that 
\[
X\overline{X}K_2(z)=|Xh_0(z)|^2+|Xh_1(z)|^2.
\]
Let us first verify
\begin{equation}\label{eq:M_2}
\widehat{\mathcal{M}}_2^{(1)}(z;X)^2\le |Xh_0(z)|^2+|Xh_1(z)|^2,
\end{equation}
which follows directly from 
 the Cauchy-Schwarz inequality:  for any $f:=\sum^\infty_{j=0}a_jh_j\in{A^2(\Omega)}$ with $\|f\|_2=(\sum^\infty_{j=0}|a_j|^2)^{1/2}=1$,
\begin{eqnarray*}
|Xf(z)|^2 & = & |a_0Xh_0(z)+a_1Xh_1(z)|^2\\
& \leq & (|a_0|^2+|a_1|^2)(|Xh_0(z)|^2+|Xh_1(z)|^2)\\
& \le & |Xh_0(z)|^2+|Xh_1(z)|^2.
\end{eqnarray*}
On the other hand, if we take
\[
a_0=\frac{\overline{Xh_0(z)}}{(|Xh_0(z)|^2+|Xh_1(z)|^2)^{1/2}},\ \ \ a_1=\frac{\overline{Xh_1(z)}}{(|Xh_0(z)|^2+|Xh_1(z)|^2)^{1/2}},
\]
and $a_2=a_3=\cdots=0$, then $\|f\|_2=1$ and
\[
|Xf(z)|^2=|Xh_0(z)|^2+|Xh_1(z)|^2.
\]
This together with \eqref{eq:M_2} complete the proof.
\end{proof}

\begin{remark}
It would be interesting to study the relationship between the upper bound in \eqref{eq:Levi_upper_0}  and the $p-$Bergman metric.
\end{remark}

It seems that the method above does not fit for the case $p>2$ and $p=1$.

\section{Appendix: Proof of Lemma \ref{lm:ineq}}

Similar as \cite{Lindqvist06} or \cite{CZ},  we define 
\begin{eqnarray*}
\eta(t)&:=&|a+t(b-a)|^2=|a|^2+2t\,\mathrm{Re}\left\{\bar{a}(b-a)\right\}+t^2|b-a|^2\\
\kappa(t)&:=&\eta(t)^{q/2}=|a+t(b-a)|^q
\end{eqnarray*}
where $0\le t\le 1$.  
Integral by part gives
\begin{eqnarray}\label{eq:ineq_integral_by_parts}
|b|^q=\kappa(1) &=& \kappa(0)+\kappa'(0)+\int^1_0(1-t)\kappa''(t)dt\nonumber\\
&=& |a|^q+\kappa'(0)+\int^1_0(1-t)\kappa''(t)dt.
\end{eqnarray}
We have
\[
\kappa'(t)=\frac{q}{2}\eta(t)^{(q-2)/2}\eta'(t)=q|a+t(b-a)|^{q-2}\mathrm{Re}\left\{\overline{a+t(b-a)}(b-a)\right\},
\]
so that
\[
\kappa'(0)=q\,\mathrm{Re}\left\{|a|^{q-2}\overline{a}(b-a)\right\}.
\]
Moreover,
\begin{eqnarray*}
\kappa''(t) &=& \frac{q(q-2)}{4}\eta(t)^{(q-4)/2}\eta'(t)^2+\frac{q}{2}\eta(t)^{(q-2)/2}\eta''(t)\\
&=& q(q-2)|a+t(b-a)|^{q-4}\left(\mathrm{Re}\left\{\overline{a+t(b-a)}(b-a)\right\}\right)^2\\
& & +q|a+t(b-a)|^{q-2}|b-a|^2\\
&=& \frac{q(q-2)}{2}|a+t(b-a)|^{q-4}\mathrm{Re}\left\{\overline{(a+t(b-a))^2}(b-a)^2\right\}\\
& & +\frac{q^2}{2}|a+t(b-a)|^{q-2}|b-a|^2\\
&=:& I+II,
\end{eqnarray*}
where the third equality follows from the identity
\begin{equation}\label{eq:Re}
\left(\mathrm{Re}\,c\right)^2=\frac{\mathrm{Re}(c^2)}{2}+\frac{|c|^2}{2},\ \ \ c\in\mathbb{C}.
\end{equation}

Let us first consider the term $I$.  \eqref{eq:ineq_leq2_2} and \eqref{eq:ineq_geq2_2} yield 
\begin{equation}\label{eq:cd}
\left||d|^{q/2-2}\overline{d}-|c|^{q/2-2}\overline{c}\right| \leq \begin{cases}
2^{2-q/2}|d-c|^{q/2-1},\ \ \ &2<q<4,\\
(q/2-1)\left(|c|+|d-c|\right)^{q/2-2}|d-c|,\ \ \ &q\ge 4.
\end{cases}
\end{equation}
for $c,d\in\mathbb{C}$. Substitute $c=a^2$ and $d=(a+t(b-a))^2$ into \eqref{eq:cd} for $2<q<4$, we obtain
\begin{eqnarray*}
& & \left||a+t(b-a)|^{q-4}\overline{(a+t(b-a))^2}-|a|^{q-4}\overline{a}^2\right|\\
&\leq& 2^{2-q/2}\left|2ta(b-a)+t^2(b-a)^2\right|^{q/2-1}\\
&\leq& 2^{2-q/2}\left(2|a|+|b-a|\right)^{q/2-1}|b-a|^{q/2-1}\\
&\leq& 2\left(|a|+|b-a|\right)^{q/2-1}|b-a|^{q/2-1}\\
&\leq& 2\left(|a|^{q/2-1}+|b-a|^{q/2-1}\right)|b-a|^{q/2-1}
\end{eqnarray*}
since $0\leq{t}\leq1$. Similarly,  for $q\geq4$ we have
\begin{eqnarray*}
& & \left||a+t(b-a)|^{q-4}\overline{(a+t(b-a))^2}-|a|^{q-4}\overline{a}^2\right|\\
&\leq& (q/2-1)\left(|a|+|b-a|\right)^{q-4}\left|2ta(b-a)+t^2(b-a)^2\right|\\
&\leq& (q-2)\left(|a|+|b-a|\right)^{q-3}|b-a|\\
&\leq& (q-2)2^{q-4}\left(|a|^{q-3}+|b-a|^{q-3}\right)|b-a|.
\end{eqnarray*}
Thus
\begin{eqnarray}\label{eq:I}
I &=& \frac{q(q-2)}{2}\mathrm{Re}\left\{|a+t(b-a)|^{q-4}\overline{(a+t(b-a))^2}\cdot(b-a)^2\right\}\nonumber\\
&=& \frac{q(q-2)}{2}\mathrm{Re}\left\{|a|^{q-4}\overline{a}^2(b-a)^2\right\}+F_q(t,a,b)\nonumber\\
&=& \frac{q(q-2)}{2}|a|^{q-4}\mathrm{Re}\left\{\overline{a}^2(b-a)^2\right\}+F_q(t,a,b)
\end{eqnarray}
where
\begin{equation}\label{eq:F_q}
|F_q(t,a,b)|\leq\begin{cases}
q(q-2)\left(|a|^{q/2-1}+|b-a|^{q/2-1}\right)|b-a|^{q/2+1},\ \ \ &2<q<4,\\
q(q-2)^22^{q-5}\left(|a|^{q-3}+|b-a|^{q-3}\right)|b-a|^3,\ \ \ &q\geq4.
\end{cases}
\end{equation}

It remains to deal with the term $II$. Let us first consider the case $2<q\leq{3}$.  Use \eqref{eq:ineq_elementary}, we obtain
\[
\left||a+t(b-a)|^{q-2}-|a|^{q-2}\right| \leq t^{q-2}|b-a|^{q-2}\leq|b-a|^{q-2}.
\]
Thus
\begin{eqnarray}\label{eq:II}
II &=& \frac{q^2}{2}|a+t(b-a)|^{q-2}|b-a|^2\nonumber\\
&=& \frac{q^2}{2}|a|^{q-2}|b-a|^2+G_q(t,a,b),
\end{eqnarray}
where
\begin{equation}\label{eq:G_q_1}
|G_q(t,a,b)|\leq\frac{q^2}{2}|b-a|^q,\ \ \ 2<q\leq3.
\end{equation}
When $q>3$, we infer from \eqref{eq:ineq_geq1} and \eqref{eq:ineq_elementary} that
\begin{eqnarray*}
|a+t(b-a)|^{q-2} &\leq& |a|^{q-2}+(q-2)t|a+t(b-a)|^{q-3}|b-a|\\
&\leq& |a|^{q-2}+C_q(|a|^{q-3}+|b-a|^{q-3})|b-a|,
\end{eqnarray*}
where $C_q:=(q-2)\max\{1,2^{q-4}\}$. Again by \eqref{eq:ineq_geq1}, we have
\begin{eqnarray*}
|a+t(b-a)|^{q-2} &\geq& |a|^{q-2}-(q-2)t|a|^{q-3}|b-a|\\
&\geq& |a|^{q-2}-(q-2)(|a|^{q-3}+|b-a|^{q-3})|b-a|,
\end{eqnarray*}
so that \eqref{eq:II} holds with
\begin{equation}\label{eq:G_q_2}
|G_q(t,a,b)|\leq\frac{1}{2}\max\left\{q^2C_q,q^2(q-2)\right\}(|a|^{q-3}+|b-a|^{q-3})|b-a|^3.
\end{equation}
This together with \eqref{eq:I} give
\[
\int^1_0(1-t)\kappa''(t)dt = \frac{q^2}{4}|a|^{q-2}|b-a|^2+\frac{q(q-2)}{4}|a|^{q-4}\mathrm{Re}\left\{\overline{a}^2(b-a)^2\right\}+R_q(a,b),
\]
where
\[
R_q(a,b)=\int_0^1 (1-t) \left( F_q(t,a,b)+G_q(t,a,b)\right)dt.
\]
This together with \eqref{eq:ineq_integral_by_parts},   \eqref{eq:F_q}, \eqref{eq:G_q_1} and \eqref{eq:G_q_2} concludes the proof.

{\bf Acknowledgements.} The authors would like to thank Liyou Zhang for bringing our attention to \cite{LiYinji} and the referee for valuable comments and suggestions.

{\bf Data Availability Statement.} The paper has no associated data.


\begin{thebibliography}{99}


\bibitem{BD} E. Bedford and J.-P.  Demailly,  {\it Two counterexamples concerning the pluri-complex Green function in $\mathbb C^n$},  Indiana Univ.  Math.  J.  {\bf 37} (1988),  865--867.

\bibitem{BT76} E. Bedford and B. A. Taylor, {\it The Dirichlet problem for a complex Monge-Amp\`{e}re equation}, Invent. Math. {\bf 37} (1976), 1--44.

\bibitem{BB} S. R. Bell and H. P. Boas,  {\it Regularity of the Bergman projection in weakly pseudoconvex domains},  Math.  Ann.  {\bf 257} (1981),  23--30.

\bibitem{BCh} B. Berndtsson and Ph. Charpentier, {\it A Sobolev mapping property of the Bergman kernel}, Math. Z. {\bf 235} (2000), 1--10.


\bibitem{BlockiRegularity} Z.  B\l ocki,  {\it The $C^{1,1}$ regularity of the pluricomplex Green function},  Michigan Math.  J.  {\bf 47} (2000),  211--215. 

\bibitem{Blocki} Z. B\l ocki, The Complex Monge-Amp\`{e}re Operator in Pluripotential Theory, lecture notes, 2002, available at \url{http://gamma.im.uj.edu.pl/~blocki}.

\bibitem{Boas} H. P. Boas, {\it Lu Qi-Keng's problem}, J. Korean Math. Soc. {\bf 37} (2000),  253--267.





\bibitem{Chen17} B.  Y.  Chen,  {\it Bergman kernel and hyperconvexity index},  Analysis \& PDE {\bf 10} (2017),  1429--1454.




\bibitem{CX} B.-Y. Chen and Y. Xiong, {\it Some properties of the $p-$Bergman kernel and metric}, arXiv: 2208.01915v3.

\bibitem{CZ} B. Y. Chen and L. Zhang,  {\it On the $p-$Bergman theory},  Adv.  Math.  {\bf 405} (2022),  Paper No.  108516,  69pp.



\bibitem{DK} J.-P. Demailly and J. Koll\'{a}r, {\it Semi-continuity of complex singularity exponents and K\"{a}hler-Einstein metrics on Fano orbifords}, Ann. Sci. \'{E}c. Norm. Sup\'{e}r. {\bf 34} (2001), 525--556.






\bibitem{Evans} L.  Evans,  {\it A new proof of local $C^{1,\alpha}$ regularity for solutions of certain degenerate elliptic P.D.E.},  J.  Diff.  Equations {\bf 45} (1982),  356--373.

\bibitem{Guan} B. Guan,  {\it The Dirichlet problem for complex Monge-Amp\`{e}re equations and regularity of the pluri-complex Green function},  Comm.  Anal.  Geom.  {\bf 6} (1998),  687--703.




\bibitem{Harrington} P.  S.  Harrington, {\it The order of plurisubharmonicity on pseudoconvex domains with Lipschitz boundaries}, Math. Res. Lett. {\bf 15} (2008), 485--490.




\bibitem{IM} T. Iwaniec and J. Manfredi, {\it Regularity of $p$-harmonic functions in the plane}, Rev. Mat. Iberoamericana {\bf 5} (1989),  1--19.

\bibitem{Kerzman} N. Kerzman,  {\it The Bergman kernel function.  Differentiability at the boundary},  Math.  Ann.  {\bf 195} (1972),  149--158.



\bibitem{LaxZalcman} P. D. Lax and L. Zalcman, Complex Proofs of Real Theorems, University Lecture Series, 58, American Mathematical Society, Providence, RI, 2012.

\bibitem{LiYinji} Y. Li, Research on several problems in p-Bergman theory, Undergraduate thesis, Beijing Normal University.


\bibitem{Lindenstrauss} J. Lindenstrauss, {\it On the modulus of smoothness and divergent series in Banach spaces}, Michigan J. Math. {\bf 10} (1963), 241--252.


\bibitem{Lindqvist06} P. Lindqvist, Notes on the $p-$Laplacian Equations, Lectures at University of Jyv\"{a}skyl\"{a}, 2006.





 
\bibitem{PW} Z. Pasternak-Winiarski,  {\it On weights which admit the reproducing kernel of Bergman type},  Internat.  J.  Math.  \& Math.  Sci.  {\bf 15} (1992),  1--14.



 
\bibitem{Taylor} A. E. Taylor, {\it Linear operations which depend analytically on a parameter}, Ann. of Math. {\bf 39} (1938), 574--593.



\bibitem{Uhlenbeck} K.  Uhlenbeck,  {\it Regularity for a class of nonlinear elliptic systems},  Acta.  Math.  {\bf 138} (1977),  219--240.

\bibitem{Uralceva} N. Ural'ceva, {\it Degenerate quasilinear elliptic systems}, Zap. Nau\v{c}n. Sem. Leningrad. Otdel. Mat. Inst. Steklov. {\bf 7} (1968), 184--192.


\end{thebibliography}
\end{document}